\documentclass[12pt]{amsart}
\usepackage{a4wide}

\usepackage[T1]{fontenc}
\usepackage{epsfig}
\usepackage{mathtools}
\usepackage{amsmath, mathrsfs}
\usepackage{amssymb}
\usepackage{amsfonts}
\usepackage{soul}
\usepackage{graphicx}
\usepackage{xcolor}
\usepackage{amsthm}
\definecolor{deepblue}{RGB}{0,90,170}

\usepackage{amscd}
\usepackage{tikz}
\usepackage{amscd}
\usepackage{graphicx}
\usepackage{color}
\usepackage{verbatim}
\ProvideTextCommandDefault{\cprime}{\tprime}

\newtheorem{theorem}{Theorem}[section]
\newtheorem{lemma}[theorem]{Lemma}
\newtheorem{prop}[theorem]{Proposition}

\newtheorem{question}[theorem]{Question}

\newtheorem{remark}{Remark}

\renewcommand{\ge}{\geqslant}
\renewcommand{\geq}{\geqslant}
\renewcommand{\le}{\leqslant}
\renewcommand{\leq}{\leqslant}

\numberwithin{equation}{section}

\def \N {\mathbb N}

\def \Z {\mathbb Z}

\def \N {\mathbb N}

\def \Z {\mathbb Z}

\makeatletter
\@namedef{subjclassname@2020}{\textup{2020} Mathematics Subject Classification}
\makeatother

\makeindex

\begin{document}

	\title[Dimensional Entropy of Amenable Group Actions]{Dimensional Entropy of Amenable Group Actions\\ over Stable Sets and Fibres}

	\author{Xinyao He and Guohua Zhang}
	
	\address{\vskip 2pt \hskip -12pt Xinyao He}
	
	\address{\hskip -12pt School of Mathematical Sciences, Fudan University, Shanghai 200433, China}
	
	\email{hexy23@m.fudan.edu.cn}

	\address{\vskip 2pt \hskip -12pt Guohua Zhang}
	
	\address{\hskip -12pt School of Mathematical Sciences, Fudan University, Shanghai 200433, China}
	
	\email{chiaths.zhang@gmail.com}

	\subjclass[2020]{37A35, 37B05.}

	\keywords{Topological  conditional entropy, Relative topological  entropy, Bowen's dimensional entropy, Stable sets, Amenable group actions.}

	\parindent=10pt
	\begin{abstract}
This paper is devoted to the study of Bowen's dimensional entropy on subsets for actions of amenable groups. We prove three main results.
\begin{enumerate}

\item First, topological conditional entropy is characterized by the dimensional entropy of stable sets (Theorem~\ref{thm2}), answering a question of Dou, Wang and the second author of the present paper raised in [Fund. Math., 2025]. We remark that our Theorem~\ref{thm2} is the first characterization of topological conditional entropy via Bowen's dimensional entropy of stable sets even for $\mathbb{Z}$-actions.

\item  Second, we establish a dimensional entropy inequality for factor maps (Theorem~\ref{thm4}).
 It relates dimensional entropy of a set to that of its image and topological entropy of fibres, and may
be viewed as the dimensional-entropy counterpart of the factor-map inequality for packing
topological entropy due to Dou, Zheng, and Zhou proved as Theorem 1.4 in [Ergodic Theory Dynam. Systems, 2023].

\item  Third, the relative topological entropy of a factor map is determined by the dimensional entropy of the fibres (Theorem~\ref{thm3}). Notably, our proof of this formula  (Theorem~\ref{thm3}) is purely topological, in contrast to the recent measure-theoretic approach of Dou, Wang and Zhou based on relative Shannon--McMillan--Breiman theorems.
\end{enumerate}
 These results (Theorems~\ref{thm4} and \ref{thm3}) not only generalize the work of Oprocha and the second author of the present paper [Nonlinearity, 2011] from single transformations to amenable group actions, but also provide a purely topological and self-contained proof of a fibre entropy characterization recently obtained through measure-theoretic arguments.
	\end{abstract}
	
	\maketitle
	
	\setcounter{tocdepth}{1}
	
	
	\section{Introduction}

Entropy is one of the most fundamental tools for measuring the complexity of dynamical systems. Since its introduction by Kolmogorov in ergodic theory \cite{Kolmogorov1958} and by Adler, Konheim and McAndrew in topological dynamics \cite{Adler-Konheim-McAndrew1965}, entropy has played a central role in the study of dynamical systems
(see e.g.\ the survey by Katok \cite{Katok2007} or the book by Downarowicz \cite{Downarowicz2011}).
In 1973, Bowen \cite{R.Bowen1973-TAMS} introduced a dimensional entropy for subsets, defined in a manner resembling Hausdorff dimension, which has proved to be an important tool for describing the complexity carried by subsets (see e.g.\ the book by Pesin \cite{Pesin1997}).
Compared with topological entropy, dimensional entropy seems to give a more sensitive description of the complexity carried by subsets.
For instance, the dimensional entropy of any countable set is always zero, whereas the topological entropy of such a set may coincide with that of the whole space \cite{Ye-Zhang2007}; and for an invertible minimal system with finite topological entropy, there exists a dense family of subsets whose dimensional and topological entropies can be independently prescribed to any value $\alpha$ and $\beta$ (with $0\le \alpha\le \beta$) up to the system's total entropy \cite[Corollary 3.4]{Oprocha-Zhang2011}. The concept of packing topological entropy, as dynamical correspondence
of packing dimension, was introduced by Feng and Huang \cite{Feng-Huang2012}, where they established variational principles concerning dimensional entropy and packing topological entropy for analytic subsets in the system via the methods in geometric measure theory. Recently it was explored systematically in \cite{Wang-Zhang2025-preprint} the existence of measures of maximal dimensional entropy or packing topological entropy.

\smallskip

In recent years, much effort has been devoted to extending entropy theory beyond classical $\Z$-actions to more general groups, particularly amenable groups (see e.g.\ \cite{Ornstein-Weiss1987, Glasner-Thouvenot-Weiss2000, Lindenstrauss2001, Danilenko2001, Huang-Ye-Zhang2011, Dooley-Zhang2015, Downarowicz-Zhang2022-IJM, Downarowicz-Zhang2023, Dou-Zheng-Zhou2023, Dou-Wang-Zhang2025, Downarowicz-Weiss-Wiecek-Zhang2026-preprint, Dou-Wang-Zhou2026-preprint}).

 Recall that a countable discrete group $G$ is \emph{amenable} if it admits a sequence $\{F_n\}_{n\in\N}\subset\mathfrak{F}_G$ of finite subsets (called a \emph{F\o lner sequence}) such that
\[
\lim_{n\to\infty}\frac{|F_n\cap gF_n|}{|F_n|}=1\quad\text{for each }g\in G,
\]
where $|\cdot|$ denotes the cardinality and $\mathfrak{F}_G$ is the family of all non-empty finite subsets of $G$. A F\o lner sequence $\{F_n\}_{n\in\N}$ is \emph{tempered} if there exists a constant $C>1$ such that
\begin{equation}\label{eq:shulman}
\Bigl|\bigcup_{k<n}F_k^{-1}F_n\Bigr|\le C|F_n|\quad\text{for all }n\ge 2.
\end{equation}
	It is well known that
the cardinality of every F\o lner sequence in each countably infinite discrete amenable group will tend to infinity, and that
 every countably infinite discrete amenable group admits a tempered F\o lner sequence $\{F_{n}\}_{n\in\mathbb{N}}\subset\mathfrak{F}_G$ satisfying
	\begin{equation}	\label{1.1}
		\lim_{n\to\infty}\dfrac{|F_{n}|}{\log n}=\infty.
	\end{equation}

Throughout this paper, $G$ denotes a countably infinite discrete amenable group along with the unity $e_G$ and a F\o lner sequence $\{F_{n}\}_{n\in\mathbb{N}}\subset\mathfrak{F}_G$, and $(X,G)$ denotes a $G$-action: a non-empty compact metric space $X$ with a compatible metric $d$, together with a group $G$ of self-homeomorphisms of $X$ (with $e_G$ denoting the identity transformation).

\smallskip

In this paper, we study Bowen's dimensional entropy for amenable group actions from two perspectives: its
relation to topological conditional entropy via stable sets, and its behaviour under factor maps. Recall that $\pi: (X, G)\to (Y, G)$ is
 a \emph{factor map} between two $G$-actions, if $\pi: X\to Y$ is a continuous surjection
with $\pi\circ g=g\circ\pi$ for all $g\in G$.

\smallskip

Let us overview our main results in the order proved in the paper. Note that Theorems~\ref{thm4} and \ref{thm3} generalize main results of \cite{Oprocha-Zhang2011} from $\mathbb{Z}$-actions to amenable group actions.
\begin{enumerate}

\item Following \cite{R.Bowen1972, Misiurewicz1976-SM}, the topological conditional entropy of amenable group actions was characterized recently by topological entropy of stable sets \cite[Theorem 2.2]{Dou-Wang-Zhang2025}. Our first result Theorem \ref{thm2} answers
    \cite[Question 2.8]{Dou-Wang-Zhang2025}, which provides a characterization of the topological conditional entropy of amenable group actions by the dimensional entropy of stable sets. We remark that our Theorem~\ref{thm2} is the first characterization of topological conditional entropy via Bowen's dimensional entropy of stable sets even for $\mathbb{Z}$-actions.

\item In \cite{Dou-Zheng-Zhou2023} Dou, Zheng and Zhou
gave a systematic study of packing topological entropy for amenable group
actions, establishing a variational principle and entropy inequalities for factor
maps. Our second result Theorem \ref{thm4} concerns an upper bound for the dimensional entropy of a set under a
factor map between amenable group actions, which can be viewed as
the counterpart of \cite[Theorem 1.4]{Dou-Zheng-Zhou2023} within the dimensional entropy framework, complementing their study from the dimensional entropy side.

\item
Our third result Theorem \ref{thm3} provides a characterization of the relative topological entropy in terms of the dimensional entropy of the fibres for a factor map between amenable group actions. While building a preprint version of the paper,
we learned from Dou that Theorem \ref{thm3} has also been
obtained independently in their recent preprint as \cite[Theorem 1.3]{Dou-Wang-Zhou2026-preprint}, by a measure-theoretic approach.
 However, Theorem \ref{thm3} was proved in this paper by purely topological arguments.
\end{enumerate}

\subsection*{Statements of the main results and strategy}

Our three main theorems are proved by a unified methodology
based on the Lindenstrauss covering lemma (Lemma \ref{coveringlem}): each theorem relies on a key
covering proposition (Propositions \ref{prop14}, \ref{section-3}, and \ref{prop15}, respectively) that provides quantitative
control over covering numbers in terms of dimensional entropy. The covering lemma
then allows us to combine these local estimates into global entropy bounds. More precisely, the main theorems of
this paper all rely on Lemma \ref{coveringlem} as their common
combinatorial backbone. Each proof employs the lemma in a slightly different manner: for
Theorem \ref{thm2}, it converts local stable-set entropy bounds into global covering estimates; for
Theorem \ref{thm4}, it combines fibrewise and base-space coverings into a single estimate; for Theorem \ref{thm3},
 it yields a uniform covering bound for all fibres simultaneously.

\smallskip

For $\varepsilon > 0$ and $x\in X$, the \emph{$\varepsilon$-stable set} of $x$ is defined by
\[\Gamma_\varepsilon(x)=\{y\in X: d(gx,gy)\le\varepsilon,\ \forall g\in G\}.\]
In \cite{Dou-Wang-Zhang2025} the authors extended Bowen's classical result \cite{R.Bowen1972} from $\mathbb{Z}$-actions to amenable groups, by showing that the topological
conditional entropy $h^*(X,G)$ can be characterized by the topological entropy of local stable
sets \cite[Theorem 2.2]{Dou-Wang-Zhang2025}:
\[
h^{*}(X,G)=\lim_{\varepsilon\to 0}\sup_{x\in X}
	h_{\mathrm{top}}(\Gamma_{\varepsilon}(x),\{F_{n}\}_{n\in\mathbb{N}}).
	\]
 They further raised \cite[Question 2.8]{Dou-Wang-Zhang2025}: whether such a
characterization remains valid when topological entropy is replaced by Bowen's dimensional
entropy $h_{\dim}$.

 The following theorem not only answers this question affirmatively for a tempered F\o lner sequence satisfying (\ref{1.1}), but also provides the first characterization of topological conditional entropy via Bowen's dimensional entropy of stable sets even for $\mathbb{Z}$-actions.

	\begin{theorem}\label{thm2}
		Let $\{F_{n}\}_{n\in\mathbb{N}}$ be a tempered F\o lner sequence satisfying (\ref{1.1}). Then
		$$
		h^{*}(X,G)=\lim_{\varepsilon\to 0}\sup_{x\in X}
		h_{\mathrm{dim}}(\Gamma_{\varepsilon}(x),\{F_{n}\}_{n\in\mathbb{N}}).
		$$
	\end{theorem}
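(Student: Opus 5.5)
Since Bowen's dimensional entropy never exceeds topological entropy on any subset, $h_{\dim}(Z,\{F_n\})\le h_{\mathrm{top}}(Z,\{F_n\})$, so \cite[Theorem 2.2]{Dou-Wang-Zhang2025} immediately gives
\[
\lim_{\varepsilon\to 0}\sup_{x\in X} h_{\dim}(\Gamma_\varepsilon(x),\{F_n\}_{n\in\N})\le h^*(X,G).
\]
The limit in $\varepsilon$ exists by monotonicity, since $\Gamma_\varepsilon(x)$ shrinks as $\varepsilon$ decreases. The whole content of the theorem is therefore the reverse inequality. Writing $h:=\lim_{\varepsilon\to0}\sup_x h_{\dim}(\Gamma_\varepsilon(x))$, I would show $h^*(X,G)\le h+\gamma$ for arbitrary $\gamma>0$.

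Recall that $h^*(X,G)=\lim_{\varepsilon\to0}\lim_{\delta\to 0}\lim_n \frac1{|F_n|}\sup_x \log r_{F_n}(\delta,\Gamma_\varepsilon^{F_n}(x))$, or an equivalent formulation via covers, where $\Gamma^{F}_\varepsilon(x)$ is the finite-time stable set $\{y: d(gx,gy)\le\varepsilon,\ g\in F\}$. Fix $\varepsilon$ small enough that $\sup_x h_{\dim}(\Gamma_{\varepsilon}(x))<h+\gamma$, and fix $\delta>0$.

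\textbf{Key covering proposition.} The core step (the analogue of Proposition~\ref{prop14} announced in the introduction) is a uniform statement. There exist $N$ and a slightly smaller $\varepsilon'$ such that for every $x$, the finite stable set $\Gamma^{F}_{\varepsilon'}(x)$, for $F$ ranging over suitable large finite sets, can be covered by Bowen balls $B_{F_{n_i}}(y_i,\delta)$ with $n_i\ge N$ and $\sum_i e^{-(h+\gamma)|F_{n_i}|}<1$. For the genuine stable set $\Gamma_\varepsilon(x)$ this is the definition of $h_{\dim}<h+\gamma$. Uniformity in $x$ comes from compactness: by upper semicontinuity of $x\mapsto\Gamma_\varepsilon(x)$ (in the Hausdorff sense) and openness of Bowen balls, a cover of $\Gamma_\varepsilon(x)$ by finitely many open Bowen balls also covers $\Gamma^{K}_{\varepsilon'}(x')$ for $x'$ near $x$ and $K$ a large finite set. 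Finitely many $x$ then suffice. This makes the family of lengths $\{n_i\}$ bounded, which is essential.

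\textbf{Assembling via Lindenstrauss.} Next, for a large $F_n$ and any $x$, I cover $\Gamma_{\varepsilon'}^{F_n}(x)$ with the help of Lemma~\ref{coveringlem}. For each $y$ in the set and each translate $g\in F_n$, note that $gy\in\Gamma^{F_ng^{-1}}_{\varepsilon'}(gx)$. Applying the uniform cover at the point $gx$ selects a length $n(g,y)\in[N,M]$. The Lindenstrauss lemma, which uses temperedness, extracts from the shapes $\{F_{n(g,y)}g\}$ a disjoint subfamily covering a $(1-\gamma)$ fraction of $F_n$.

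Counting the possible configurations turns the weights $\sum e^{-(h+\gamma)|F_{n_i}|}<1$ into a bound of roughly $e^{(h+2\gamma)|F_n|}$ on the number of $(F_n,2\delta)$-distinct patterns. The uncovered $\gamma$ fraction contributes at most $r(\delta)^{\gamma|F_n|}$, and the combinatorial choices of the chosen shapes contribute $e^{o(|F_n|)}$. Here condition (\ref{1.1}) together with the bound $M$ controls the number of admissible centre/length choices. This yields $\frac1{|F_n|}\log r_{F_n}(2\delta,\Gamma_{\varepsilon'}^{F_n}(x))\le h+3\gamma+\gamma\log r(\delta)$ uniformly in $x$, and hence $h^*\le h+O(\gamma)$.

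\textbf{Main obstacle.} I expect the main obstacle to be the uniformity and bounded-length step. The dimensional-entropy cover naturally lives on infinite stable sets and is not uniform in $x$, whereas the Lindenstrauss argument needs covers of finite-time stable sets at every orbit point $gx$ with lengths in a fixed bounded range. I would first try the compactness/semicontinuity argument sketched above, choosing $\varepsilon'<\varepsilon$ so that finite-time stable sets converge into $\Gamma_\varepsilon$. I would also need to handle the fact that the counting of weights $e^{-s|F_{n_i}|}$ over Bowen balls must multiply correctly across disjoint tiles. A secondary difficulty is matching the $(F_n,\delta)$-spanning formulation of $h^*$ with balls of the varying shapes $F_{n_i}g$ that cover only most of $F_n$.
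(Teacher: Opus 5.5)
Your proposal is correct in outline and follows the paper's route: the lower bound comes from \cite[Theorem~2.2]{Dou-Wang-Zhang2025} together with $h_{\dim}\le h_{\mathrm{top}}$, and the upper bound from a uniform covering estimate for the Bowen balls $B^{F_n}_\eta(x)$ (Proposition~\ref{prop14}, imported from \cite{He-Zhang-Zhang2026-preprint}, whose argument runs parallel to the paper's proof of Proposition~\ref{prop15}: finite open subcovers of the compact stable sets, extended to neighbourhoods by compactness, then Lemma~\ref{coveringlem} and the counting Lemma~\ref{combilem}), finished by choosing a cover $\mathcal V$ with $\operatorname{diam}(\mathcal V)<\eta$; working with open covers instead of spanning sets removes your ``secondary difficulty'', since $N(\mathcal U^{A\cup B},W)\le N(\mathcal U^{A},W)\,N(\mathcal U^{B},W)$. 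When you write it out, note that Lemma~\ref{coveringlem} needs $M$ such uniform covers at successively much larger scales (its cross-level hypothesis), not a single one, and that normalizing to $e_G\in F_n$ bounds by $|F|$ the number of centres producing a given tile; also, shrinking to $\varepsilon'<\varepsilon$ is unnecessary, because closedness of $\{(x,y):d(gx,gy)\le\varepsilon\}$ already gives $B^K_\varepsilon(x')\subset U(x)$ for all $x'$ near $x$ once the finite set $K$ is large.
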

	
	\begin{remark}
		It was also considered in \cite{Dou-Wang-Zhang2025} the following version of \(\varepsilon\)-stable set of \(x\):
		\begin{equation} \label{stable}
			\Gamma_{\varepsilon}^{\left\{F_n\right\}_{n \in \mathbb{N}}}(x)=\bigcap_{n \in \mathbb{N}} \{ y \in X: d(gx, gy) \le \varepsilon,\ \forall g \in F_n \} \supset \Gamma_{\varepsilon}(x).
		\end{equation}
		From \cite[Theorem 2.2]{Dou-Wang-Zhang2025} and the definition of the Bowen's dimensional entropy presented in the next section, Theorem~\ref{thm2} remains true if $\Gamma_\varepsilon(x)$ is replaced by
		$\Gamma_{\varepsilon}^{\{F_n\}_{n \in \mathbb{N}}}(x)$:
\begin{equation} \label{more}
h^{*}(X,G)= \lim_{\varepsilon\to 0}\sup_{x\in X}
			h_{\mathrm{dim}}(\Gamma_{\varepsilon}^{\{F_{n}\}_{n\in\mathbb{N}}}(x),\{F_{n}\}_{n\in\mathbb{N}}).
\end{equation}
In fact, as argued in the proof of Theorem \ref{thm2}, one already obtains
		\begin{equation*}
			h^{*}(X,G)\geq \lim_{\varepsilon\to 0}\sup_{x\in X}
			h_{\mathrm{dim}}(\Gamma_{\varepsilon}^{\{F_{n}\}_{n\in\mathbb{N}}}(x),\{F_{n}\}_{n\in\mathbb{N}})
			\geq \lim_{\varepsilon\to 0}\sup_{x\in X}
			h_{\mathrm{dim}}(\Gamma_{\varepsilon}(x),\{F_{n}\}_{n\in\mathbb{N}})
		\end{equation*}
by \cite[Theorem~2.2 and Proposition~5.1]{Dou-Wang-Zhang2025}, and then the identity \eqref{more} by Theorem \ref{thm2}.
	\end{remark}

The following two
results generalize the work of Oprocha and the second author of the present paper \cite{Oprocha-Zhang2011} from
single transformations to amenable group actions. We remark that, for $\mathbb{Z}$-actions, the corresponding results in \cite[Theorems 4.2 and 5.1]{Oprocha-Zhang2011} follow directly from Theorems~\ref{thm4} and \ref{thm3} by taking $F_n=\{0, 1, \cdots,n-1\}$ for each $n\in\mathbb N$.

\smallskip

Our second result concerns an upper bound for the dimensional entropy of a set under a factor
map. We also remark that the following inequality holds along any F\o lner
sequence: neither temperedness nor the growth condition \eqref{1.1} is required.

	\begin{theorem}\label{thm4}
Let $\pi: (X, G)\to (Y, G)$ be a factor map between two $G$-actions.
		Then
		$$
		h_{\mathrm{dim}}(E,\{F_{n}\}_{n\in\mathbb{N}})
		\le
		h_{\mathrm{dim}}(\pi(E),\{F_{n}\}_{n\in\mathbb{N}})
		+\sup_{y\in Y}h_{\mathrm{top}}(\pi^{-1}(y),\{F_{n}\}_{n\in\mathbb{N}}),\ \ \ \forall \emptyset\neq E\subset X.
		$$
	\end{theorem}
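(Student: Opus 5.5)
The plan is to adapt the Bowen-type argument of \cite[Theorem 4.2]{Oprocha-Zhang2011}, replacing orbit segments $\{0,\dots,n-1\}$ by F\o lner sets. The combinatorial glue will be the Lindenstrauss covering lemma (Lemma~\ref{coveringlem}), used in a form that needs no temperedness.

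Set $a=\sup_{y\in Y}h_{\mathrm{top}}(\pi^{-1}(y),\{F_n\})$, which we may assume finite, and fix $s>h_{\dim}(\pi(E),\{F_n\})$ and $\delta>0$. It suffices to show $h_{\dim}(E,\{F_n\})\le s+a+O(\delta)$.

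\textbf{Step 1: uniform fibre covering.} Fix $\varepsilon>0$, and use $d_{F}$ for the Bowen metric $\max_{g\in F}d(gx,gx')$. For each $y\in Y$, pick $N_y$ such that $\pi^{-1}(y)$ can be covered by $e^{|F_{N_y}|(a+\delta)}$ Bowen balls $B_{F_{N_y}}(x,\varepsilon)$. By compactness and continuity of $\pi$, the same collection of balls with radius $2\varepsilon$ covers $\pi^{-1}(V_y)$ for some open neighbourhood $V_y$ of $y$. Take a finite subcover $V_{y_1},\dots,V_{y_k}$ of $Y$, let $\gamma>0$ be a Lebesgue number for it, and let $M=\max_i N_{y_i}$. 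Then for every $y\in Y$ there is a finite set $T\in\{F_{N_{y_i}}\}$ such that $\pi^{-1}(B(y,\gamma))$ is covered by $e^{|T|(a+\delta)}$ balls $B_T(\cdot,2\varepsilon)$.

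\textbf{Step 2: covering the fibre over a Bowen ball.} Take a cover of $\pi(E)$ by Bowen balls $B_{F_{n_j}}(y_j,\gamma/2)$ with $n_j$ large and $\sum_j e^{-s|F_{n_j}|}$ small, as allowed by the definition of $h_{\dim}$. The key proposition to prove (the analogue of Proposition~\ref{section-3}) is the following. For $n$ large, $\pi^{-1}(B_{F_n}(y,\gamma/2))$ is covered by at most $e^{|F_n|(a+2\delta)}$ Bowen balls $B_{F_n}(x,4\varepsilon)$.

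To prove it, for each $g\in F_n$ attach a tile $T_gg$ with $T_g$ as in Step 1 applied to the point $gy$. Lindenstrauss' covering lemma, or simply the Ornstein--Weiss quasitiling in its finite-F\o lner form, gives a subfamily of pairwise almost disjoint tiles covering all but a $\delta$-fraction of $F_n$. This uses only that $F_n$ is $(\bigcup_{m\le M}F_m,\delta)$-invariant for large $n$, so no temperedness is needed. On each chosen tile $T g$, a point $x$ in the preimage satisfies $d(gx,\cdot)$-control via the $e^{|T|(a+\delta)}$ balls for the fibre over $B(gy,\gamma)$. Combining choices over tiles multiplies these counts, and the number of tiles times the overlap gives $e^{|F_n|(a+\delta)(1+\delta)}$. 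On the uncovered $\delta|F_n|$ coordinates we use an $\varepsilon$-net of $X$ of size $L$, contributing a factor $L^{\delta|F_n|}$.

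\textbf{Step 3: summation.} Substituting gives
\[
\sum_j e^{|F_{n_j}|(a+3\delta+\delta\log L)}\,e^{-(s+a+3\delta+\delta\log L)|F_{n_j}|}=\sum_j e^{-s|F_{n_j}|},
\]
which is small. Hence the $(s+a+O(\delta))$-dimensional Bowen measure of $E$ at scale $4\varepsilon$ is zero. Letting $\delta\to0$, then $\varepsilon\to0$, and then $s\downarrow h_{\dim}(\pi(E))$ finishes the proof.

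\textbf{Main obstacle.} The delicate point is Step 2: the tiling must control overlaps and boundary effects along an arbitrary F\o lner sequence. There is also a subtlety that $L$ depends on $\varepsilon$ while $\delta$ is chosen afterwards. I would first fix $\varepsilon$ and then choose $\delta$ small relative to $\log L$, which is harmless since $\delta\to0$ comes before $\varepsilon\to0$.
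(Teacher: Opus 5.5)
Your overall architecture matches the paper's. You cover each fibre near its base point and extend the cover to a neighbourhood by compactness. You then cover $\pi(E)$ by small dynamical cells. Each base cell is tiled so that the tile at $g$ depends only on $gy$, which means one tiling serves the whole preimage and no counting of tilings as in Lemma~\ref{combilem} is needed. Finally you multiply counts over tiles and sum. Using Bowen balls instead of open covers is only cosmetic.

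The gap is in Step~2. You have one family of position-dependent tiles $\{T_g g\}$, with $T_g$ drawn from a single finite set of shapes $F_{N_{y_1}},\dots,F_{N_{y_k}}$. You assert that Lemma~\ref{coveringlem} (or Ornstein--Weiss) extracts from it an almost disjoint subfamily covering all but a $\delta$-fraction of $F_n$, using only invariance of $F_n$. Neither result says this, and as a general principle it is false. Take $G=\mathbb{Z}$ and every $T_g=\{0,1,3\}$. An overlap allowance below $1/3$ forces disjointness. For each chosen tile $\{a,a+1,a+3\}$, every other tile containing $a+2$ meets it, so $a+2$ stays uncovered. Hence at most $\frac34|F_n|$ can be covered. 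Ornstein--Weiss quasitiling also does not apply: there the shapes are chosen freely rather than dictated by position, and a tower of scales is still needed.

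Lemma~\ref{coveringlem} actually requires $M=M(\delta,C)$ \emph{levels} of shapes $H_{i,j}$, with three properties:
\begin{itemize}
\item a tile of every level is available at every $g\in F_n^*$, which is what gives $\alpha\ge1-\delta$;
\item each level is $(1+\delta)$-invariant relative to all shapes of the earlier levels;
\item the shapes within each level satisfy $|\bigcup_{k'<k}H_{i,k'}^{-1}H_{i,k}|\le C|H_{i,k}|$.
\end{itemize}
A single scale covers only a fixed fraction in general. The remaining holes must be filled by much smaller tiles, which is why the number of scales grows as $\delta\to0$. For interval shapes in $\mathbb{Z}$, a left-to-right greedy choice does work with one level, which is why a one-level Bowen-type argument suffices there.

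The repair is exactly the paper's Proposition~\ref{section-3}:
\begin{enumerate}
\item Since the fibre bound holds for all large $n$, in Step~1 you may require $N_y\ge K$ for any prescribed $K$.
\item Repeat Step~1 $M$ times with increasing thresholds, so that each level is $(1+\delta)$-invariant relative to all earlier shapes.
\item Take $\gamma$ to be a common Lebesgue number for the $M$ covers of $Y$.
\item For the within-level condition, draw every shape from a tempered subsequence $\{F^{(1)}_n\}$ of $\{F_n\}$. Such a subsequence always exists, and the fibre entropies along it do not exceed $\sup_y h_{\mathrm{top}}(\pi^{-1}(y),\{F_n\})$.
\end{enumerate}
Then every $g\in F_n^*$ gets a tile of each level from the point $gy$, and Lemma~\ref{coveringlem} applies with $\alpha\ge1-\delta$. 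Your counting then goes through, with overlap factor $(1-10\delta^{1/4})^{-1}$ and uncovered fraction $\delta+\delta^{1/4}$. So ``no temperedness is needed'' is true of the theorem, but only because of this passage to a subsequence, not because invariance of $F_n$ suffices.
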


Note that, the second item on the
right-hand side of Theorem \ref{thm4} is exactly $h_{\mathrm{top}}(G,X|\pi)$, the relative topological entropy of the factor map $\pi$, that is,
	$$
	h_{\mathrm{top}}(G,X|\pi)=\sup_{y\in Y}h_{\mathrm{top}}(\pi^{-1}(y),\{F_n\}_{n\in\mathbb N}),
	$$
which was first proved by Dooley and the second author of the paper in \cite[Theorem 13.3]{Dooley-Zhang2015} and later by Yan as \cite[Theorem 5.1]{Yan2015}. Our third result provides another characterization of the relative topological entropy of a factor map in terms of the
dimensional entropy of the fibres, although in general topological entropy and dimensional entropy of subsets may differ (see for example \cite{Oprocha-Zhang2011})
as explained at the beginning of the paper.

	\begin{theorem}\label{thm3}
Let $\pi: (X, G)\to (Y, G)$ be a factor map between two $G$-actions, and let $\{F_{n}\}_{n\in\mathbb{N}}$ be a tempered F\o lner sequence satisfying (\ref{1.1}). Then
		$$
		h_{\mathrm{top}}(G,X|\pi)
		=\sup_{y\in Y}h_{\mathrm{dim}}(\pi^{-1}(y),\{F_{n}\}_{n\in\mathbb{N}}).
		$$
	\end{theorem}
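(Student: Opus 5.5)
The identity in Theorem~\ref{thm3} splits into two inequalities, and one of them is immediate. Since Bowen's dimensional entropy never exceeds the (upper capacity) topological entropy of the same set, we have
\[
\sup_{y\in Y}h_{\mathrm{dim}}(\pi^{-1}(y),\{F_n\}_{n\in\mathbb N})\le \sup_{y\in Y}h_{\mathrm{top}}(\pi^{-1}(y),\{F_n\}_{n\in\mathbb N})=h_{\mathrm{top}}(G,X|\pi),
\]
where the last equality is \cite[Theorem 13.3]{Dooley-Zhang2015}. Alternatively, one can apply Theorem~\ref{thm4} with $E=\pi^{-1}(y)$: the image $\pi(E)$ is a single point, whose dimensional entropy vanishes. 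So the real content is the reverse inequality
\[
h_{\mathrm{top}}(G,X|\pi)\le \sup_{y}h_{\mathrm{dim}}(\pi^{-1}(y),\{F_n\}).
\]

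For the reverse inequality I would argue by contradiction. Put $s:=\sup_y h_{\mathrm{dim}}(\pi^{-1}(y),\{F_n\})$ and fix $\varepsilon>0$ and $\delta>0$. The goal is a uniform covering estimate: for all large $n$ and every $y$, the fibre $\pi^{-1}(y)$ can be covered by at most $e^{|F_n|(s+C\delta)}$ Bowen balls $B_{F_n}(x,\varepsilon)$, where $C$ depends only on the tempering constant. This is the key covering proposition announced in the introduction as Proposition~\ref{prop15}; I would prove it first. Given it, the relative entropy at scale $\varepsilon$ (computed via spanning sets of fibres, i.e.\ the Dooley--Zhang/Yan formula) is at most $s+C\delta$. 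Letting $\delta\to0$ and then $\varepsilon\to0$ gives the claim.

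The covering proposition is proved in three steps.
\begin{enumerate}
\item Fix $y$. Since $h_{\mathrm{dim}}(\pi^{-1}(y))<s+\delta$, the definition of Bowen's dimensional entropy gives, for any $N$, a countable cover of $\pi^{-1}(y)$ by Bowen balls $B_{F_{n_i}}(x_i,\varepsilon)$ with $n_i\ge N$ and $\sum_i e^{-(s+\delta)|F_{n_i}|}<1$.
\item By compactness of the fibre, pass to a finite subcover. Then, using upper semicontinuity of $y\mapsto\pi^{-1}(y)$ and enlarging radii slightly (say from $\varepsilon$ to $2\varepsilon$), the same finite family covers $\pi^{-1}(y')$ for all $y'$ in a neighbourhood $U_y$ of $y$.
\item By compactness of $Y$, finitely many such families suffice. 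So there are finitely many scales $n\le M$ and a finite collection of "tiles", each with a weighted cover satisfying the summability bound $\sum e^{-(s+\delta)|F_{n_i}|}<1$.
\end{enumerate}

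For large $n$, given a point $x$ in a fibre $\pi^{-1}(y)$, each $g\in F_n$ carries a chosen ball $B_{F_{m(g)}}(\cdot,2\varepsilon)$ containing $gx$, with $m(g)\in[N,M]$, selected in the cover attached to the neighbourhood of $gy$. Apply the Lindenstrauss covering lemma (Lemma~\ref{coveringlem}) to the family of shapes $F_{m(g)}g$ to extract a disjoint subfamily covering most of $F_n$, up to an error of order $\delta|F_n|$. Counting the resulting patterns in the standard way gives at most $e^{|F_n|(s+\delta)}$ choices on the covered part. The uncovered part contributes at most $e^{\delta|F_n|\log K}$, where $K$ is a cover number of $X$ at scale $\varepsilon$. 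The number of choices of the subfamily contributes at most $e^{o(|F_n|)}$; this is where condition \eqref{1.1} and temperedness should enter, to control the combinatorial count of centres and scales. The summability $\sum e^{-(s+\delta)|F_{n_i}|}<1$ is what makes the weighted count close, exactly as in Bowen's proof of $h_{\mathrm{dim}}(X)=h_{\mathrm{top}}(X)$.

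The main obstacle is this counting step. Unlike the $\mathbb Z$ case, the selected balls are indexed by different scales and translates, and their union must be handled via the covering lemma with the $(1-\delta)$-disjointness it provides. Moreover, the covers vary with the base point $gy$, so the patterns must be counted uniformly in $y$. This uniformity is exactly what requires the compactness argument in step~2 of the covering proposition. If the direct argument stalls, I would first fix a single neighbourhood structure, discretizing $Y$ by a finite open cover and coding each orbit segment by its itinerary, so that the counting reduces to the absolute case.
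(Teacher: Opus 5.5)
Your architecture matches the paper's. The easy inequality comes from \cite[Theorem 13.3]{Dooley-Zhang2015} and $h_{\mathrm{dim}}\le h_{\mathrm{top}}$. The hard one comes from a uniform fibre covering estimate (Proposition~\ref{prop15}), built from summable fibrewise covers, upper semicontinuity of fibres, compactness of $Y$, Lemma~\ref{coveringlem} and Lemma~\ref{combilem}. Using Bowen balls instead of $\mathcal U^{F_n}$ is harmless.

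The genuine gap is where you apply Lemma~\ref{coveringlem}. Your steps 1--3 produce a single finite family of scales in $[N,M]$ and one tile $F_{m(g)}g$ per $g$. Lemma~\ref{coveringlem} does not apply to such a one-level family: its hypotheses require $M=M(\delta,C,\{e_G\})$ levels. For a general amenable $G$ (unlike intervals in $\mathbb Z$, where a greedy left-to-right choice works), a one-level covering argument only guarantees an almost disjoint subfamily covering a fixed fraction of $F_n$, not $1-\delta$. To repair this, run steps 1--3 once for each level $i=1,\ldots,M$. At the $i$-th run take the threshold so large that the level-$i$ scales exceed all earlier ones and satisfy $|\bigcup_{i'<i}H_{i',*}^{-1}H_{i,k}|\le(1+\delta)|H_{i,k}|$. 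Then every $g\in F_n^*$ carries a tile at every level, which gives $\alpha\ge 1-\delta$. Temperedness is used here, to verify the within-level condition $|\bigcup_{k'<k}H_{i,k'}^{-1}H_{i,k}|\le C|H_{i,k}|$, not in the counting. Also, the extracted family is only $10\delta^{1/4}$-disjoint, so $\sum_{F\in\mathcal F}|F|\le|F_n|/(1-10\delta^{1/4})$. The paper absorbs this loss by working with $\lambda_\delta=(1-10\delta^{1/4})\lambda$.

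A second point your ``standard'' pattern count glosses over concerns the sets $W_{\mathcal F}$, obtained by grouping fibre points $z$ by their extracted family $\mathcal F_z$. Their number is at most $2^{\delta|F_n|}$ by Lemma~\ref{combilem}, and this is where (\ref{1.1}) is used. Within one $W_{\mathcal F}$, the same tile set $F\in\mathcal F$ can arise as $F_{R_{i,j}}a$ with different centres $a$ for different points. Different centres mean different neighbourhoods of $ay$ and different covers, so $N(\mathcal U^F,W_{\mathcal F})$ must be bounded by a sum over all such labellings. The paper handles this as follows:
\begin{enumerate}
\item It passes to $F_n\cup\{e_G\}$, so that every centre satisfies $a\in F$.
\item It makes the $F_{R_{i,j}}$ pairwise distinct within a level, with disjoint size ranges across levels, so that $F$ and $a$ determine $i$ and $j$.
\item This gives at most $|F|$ labellings, each contributing fewer than $e^{\lambda^*|F|}$ sets.
\item The factor $|F|$ is absorbed via $|F|\le e^{(\lambda_\delta-\lambda^*)|F|}$.
\end{enumerate}
In particular, the summability $\sum k\,e^{-\lambda^*|F_R|}<1$ is used only through the per-scale bound $k<e^{\lambda^*|F_R|}$, not as a Bowen-type weighted count. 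With these two repairs your argument becomes the paper's proof.
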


\begin{remark}
When $E=X$, Theorems~\ref{thm4} and \ref{thm3} yield directly the inequality
$$
h_{\mathrm{top}}(X, G) \leq \sup _{y \in Y} h_{\operatorname{dim}}\left(\pi^{-1}(y),\left\{F_n\right\}_{n \in \mathbb{N}}\right)+h_{\mathrm{top}}(Y, G),
$$
which is precisely the inequality \cite[Corollary 1.4]{Dou-Wang-Zhou2026-preprint} via their measure-theoretic approach. However, our Theorem \ref{thm4} is more general as it applies to arbitrary subsets $E \subset X$, not only to the whole space $E=X$. Moreover, our proof is purely topological and does not rely on the Shannon-McMillan-Breiman theorem or the variational principle.
\end{remark}

\subsection*{Further comments}

While preparing this manuscript, we learned from Dou that Theorem
\ref{thm3} was established as \cite[Theorem 1.3]{Dou-Wang-Zhou2026-preprint} via a completely different route. Their proof proceeds as follows: they prove firstly a relative Shannon--McMillan--Breiman theorem for conditional entropy with
respect to a $G$-invariant sub-$\sigma$-algebra, then deduce a relative Brin--Katok entropy formula and finally obtain the
required conclusion by applying the Brin--Katok formula to the special case of fibre measures along with the variational principle of conditional entropy.

However, in this paper all results are proved by purely topological arguments, which not only yield new
insights into the combinatorial nature of dimensional entropy, but also offer some
methodological advantages over the measure-theoretic approach in \cite{Dou-Wang-Zhou2026-preprint}.
On one hand, the measure-theoretic proofs in \cite{Dou-Wang-Zhou2026-preprint} rely crucially on the relative
Shannon--McMillan--Breiman theorem (which was established till now only for amenable group actions) and the variational principle for conditional entropy, in contrast, our topological approach works directly with open covers and Bowen balls,
revealing that the entropy formulae in question are in fact consequences of a more elementary
combinatorial structure, namely the covering properties encoded in the Lindenstrauss covering
lemma (Lemma~\ref{coveringlem}). On the other hand, by bypassing the variational principle entirely, our approach demonstrates that the
coincidence of relative topological entropy with the supremum of fibre dimensional entropies is
a structural property of the topological dynamics itself, not merely a consequence of the equality
between measure-theoretic and topological entropies. This structural viewpoint may open
avenues for extending such results beyond the amenable setting, where variational principles are
not yet fully established.

Nevertheless, both approaches have their own merits. The measure-theoretic approach presented in \cite{Dou-Wang-Zhou2026-preprint} places the fibre
entropy formula within a broader framework of conditional entropy theory, connecting it to some deep results in ergodic theory.
Our topological approach reveals that the formula is in fact a consequence of elementary combinatorial properties of amenable group
actions, and may therefore be more readily adaptable to settings where the measure-theoretic machinery is not
available. The coexistence of these independent proofs may strengthen our understanding of the
result and suggests that it reflects a fundamental structural feature of dynamical systems over factors.

\subsection*{Two remaining problems}

We end this section with two related problems.

\smallskip

It seems that the assumption of a F\o lner sequence being tempered and
satisfying (\ref{1.1}) is very natural in Theorem \ref{thm2}, note that \cite[Theorem 2.4]{Dou-Wang-Zhang2025}
provides another characterization of topological
conditional entropy using Bowen's idea of dimensional entropy under the same assumption.
We also note that if we let $F_n=\{0, 1, \cdots, n\}$ for each $n \in \mathbb{N}$, then it forms a F\o lner sequence of $\mathbb{Z}$ with all the required properties.

However, note that \cite[Theorem 2.2]{Dou-Wang-Zhang2025} was proved there for any F\o lner sequence, and that it was raised as
\cite[Question 2.9]{Dou-Wang-Zhang2025} if \cite[Theorem 2.4]{Dou-Wang-Zhang2025} holds for arbitrary F\o lner sequence. Thus it
seems reasonable to ask whether either of these assumptions of temperedness and
the growth condition (\ref{1.1}) can be removed or weakened.
We remark that, in the proof of Theorem \ref{thm2}, the temperedness assumption and
the growth condition (\ref{1.1}) are only used when we apply Proposition \ref{prop14}. Thus the answer to the question below will be
affirmative once Proposition \ref{prop14}
can be proved for any F\o lner sequence.

\begin{question}
Does Theorem \ref{thm2} hold along arbitrary F\o lner sequences?
\end{question}

Our topological approach relies crucially on the Lindenstrauss covering lemma (Lemma \ref{coveringlem}), which is specific to amenable groups. For sofic
group actions, where a version of entropy theory has been developed recently by many researchers,
the appropriate covering lemmas are not yet available. Thus the following question arises naturally:

\begin{question}
Is there a generalization of our combinatorial method to the sofic setting? Alternatively, is there a purely topological proof of
the fibre entropy formula for sofic groups?
\end{question}

\subsection*{Organization of the paper} The remainder of the paper is organized as follows. Section~\ref{preli} collects some preliminaries on amenable groups and several notions of entropy. Section~\ref{conditional} proves Theorem~\ref{thm2}, giving a characterization of topological conditional entropy via dimensional entropy of stable sets.
The proof relies on Proposition \ref{prop14}, which provides a uniform covering estimate controlling the topological
conditional entropy via the dimensional entropy of stable sets.
Section~\ref{inequality for dim} establishes a dimensional entropy inequality for factor maps (cf. Theorem~\ref{thm4}),
with Proposition \ref{section-3} as the
key covering tool that combines fibrewise estimates with a dimensional entropy bound
on the image set. Section~\ref{relative} treats dimensional entropy in the fibred setting and proves Theorem~\ref{thm3}, relying on Proposition
\ref{prop15} for a uniform covering estimate of fibres.

	\section{Preliminaries} \label{preli}
	In this section let us recall some necessary preliminaries.
	
	\subsection{Basic facts on amenable groups and their actions}\
	
	We begin with some basic ingredients from the entropy theory of amenable group actions. One of the fundamental tools in this theory is the Ornstein-Weiss convergence lemma proved firstly in \cite{Ornstein-Weiss1987}. We use the formulation given in \cite[1.3.1]{Gromov1999}.

	\begin{lemma} \label{OW-Lemma}
		Let $f: \mathfrak{F}_G \rightarrow \mathbb{R}_+$ be an invariant subadditive function, that is,
		$f (F g)= f (F)$ and $f (E\cup F)\le f (E)+ f (F)$ whenever $E, F\in \mathfrak{F}_G$ and $g\in G$.
		Then the limit $\lim\limits_{n\to \infty} \frac{f (F_n)}{|F_n|}$ exists and is independent of the choice of the F\o lner sequence $\{F_{n}\}_{n\in\mathbb{N}}$.
	\end{lemma}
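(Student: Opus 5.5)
The plan is to prove the Ornstein--Weiss lemma along the lines of Gromov's formulation: establish the quasi-tiling (Ornstein--Weiss) machinery and then compare $f(F_n)/|F_n|$ with the averages of $f$ over a fixed finite family of tile shapes. First we record elementary consequences of the hypotheses. Subadditivity applied to singletons gives $f(F)\le \sum_{g\in F} f(\{g\})=|F|\,f(\{e_G\})$ by invariance, so $0\le f(F)/|F|\le f(\{e_G\})=:c$ for all $F$. Set $\lambda=\liminf_n f(F_n)/|F_n|$ along the given sequence. The goal is to show that $\limsup_n f(F'_n)/|F'_n|\le \lambda$ for \emph{any} F\o lner sequence $\{F'_n\}_{n\in\N}$. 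Applying this both to $\{F_n\}$ itself and to any pair of F\o lner sequences then yields existence of the limit and its independence of the sequence.

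Next we fix $\delta>0$ and choose, from the given sequence, finitely many sets $T_1,\dots,T_k$ (members $F_{n_1},\dots,F_{n_k}$ with $n_1<\dots<n_k$ rapidly increasing) with $f(T_i)/|T_i|\le \lambda+\delta$. These are chosen with increasing invariance, so that each $T_{i+1}$ is very invariant with respect to $T_iT_i^{-1}$. The Ornstein--Weiss quasi-tiling theorem then says that every sufficiently invariant finite set $F$ (say $F=F'_n$ with $n$ large) contains $\delta$-disjoint right translates $T_ic$, $c\in C_i$, whose union $U$ covers at least $(1-\delta)|F|$ points. $\delta$-disjointness means we can shrink each translate to $\widehat{T_ic}\subset T_ic$ with $|\widehat{T_ic}|\ge(1-\delta)|T_i|$ and the shrunken sets pairwise disjoint. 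Invariance of $f$ under right translation gives $f(T_ic)=f(T_i)$.

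Then we estimate, using monotonicity-free subadditivity only:
\[
f(F)\le \sum_{i}\sum_{c\in C_i} f(T_ic)+f(F\setminus U)\le \sum_i |C_i|\,(\lambda+\delta)|T_i| + c\,\delta|F| .
\]
Since $\sum_i |C_i|\,|T_i|\le (1-\delta)^{-1}\sum |\widehat{T_ic}|\le (1-\delta)^{-1}|F|$, dividing by $|F|$ gives $f(F)/|F|\le (\lambda+\delta)/(1-\delta)+c\delta$. Letting $\delta\to0$ finishes the argument. The translates may overlap, but this is harmless: subadditivity over a cover of $U$ by the full translates $T_ic$ is legitimate, as $f(A\cup B)\le f(A)+f(B)$ holds without disjointness. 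Only the counting of $\sum|C_i||T_i|$ uses $\delta$-disjointness.

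The main obstacle is the quasi-tiling step itself: showing that a highly invariant $F$ admits an $\epsilon$-disjoint family of translates of the $T_i$ covering most of $F$. I would prove it by the standard greedy argument. Start with the largest shape $T_k$ and select a maximal $\delta$-disjoint family of translates $T_kc\subset F$. A counting argument using the invariance of $F$ shows that these cover a definite proportion $\eta>0$ of what remains, unless the remainder is already small. Then pass to $T_{k-1}$ on the uncovered part, using the strong invariance of $T_{i+1}$ relative to $T_i$ to keep disjointness defects controlled. After $k$ steps, with $k$ chosen so that $(1-\eta)^k<\delta$, the uncovered proportion is below $\delta$. Since the statement is a classical cited result, one could alternatively defer this step entirely to \cite{Ornstein-Weiss1987} and \cite[1.3.1]{Gromov1999}.
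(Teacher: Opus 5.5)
Your proposal is correct. Note, though, that the paper gives no proof of this lemma: it imports it from \cite{Ornstein-Weiss1987} in the formulation of \cite[1.3.1]{Gromov1999}. What you have written is therefore a self-contained replacement for a citation, not a variant of an argument in the paper.

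Your reduction is the classical quasi-tiling proof, and every step checks out:
\begin{enumerate}
\item Showing $\limsup_n f(F'_n)/|F'_n|\le\liminf_n f(F_n)/|F_n|$ for an arbitrary pair of F\o lner sequences gives existence and independence at once.
\item Tiles $T_i=F_{n_i}$ with $f(T_i)\le(\lambda+\delta)|T_i|$ exist because $\lambda$ is a $\liminf$.
\item You apply subadditivity to the cover of $F$ by the full translates $T_ic\subset F$ together with $F\setminus U$. You use $\delta$-disjointness only to bound $\sum_i|C_i||T_i|\le(1-\delta)^{-1}|F|$, which is legitimate since $\lambda+\delta\ge 0$. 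As a result you never need monotonicity of $f$, which is indeed not among the stated hypotheses.
\end{enumerate}

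The one substantial ingredient you only sketch is the quasi-tiling theorem. Within this paper you can dispense with the Ornstein--Weiss greedy argument altogether, because Lemma~\ref{coveringlem} already provides it. Choose the data as follows:
\begin{enumerate}
\item $D=\{e_G\}$ and a single shape per level, $N_i=1$, so the first condition in (1) is vacuous;
\item $H_{i,1}=T_i=F_{n_i}$, with $n_i$ chosen inductively so large that also $|(T_1\cup\cdots\cup T_{i-1})^{-1}T_i|\le(1+\delta)|T_i|$;
\item $H=F'_n$ and $A_{i,1}=\{c\in F'_n: T_ic\subset F'_n\}$.
\end{enumerate}
For $n$ large, the F\o lner property of $\{F'_n\}$ gives $\alpha\ge 1-\delta$. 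Hence you obtain a $10\delta^{1/4}$-disjoint family of right translates of the $T_i$ covering at least $(1-\delta-\delta^{1/4})|F'_n|$. Your estimate then becomes $f(F'_n)/|F'_n|\le(\lambda+\delta)/(1-10\delta^{1/4})+f(\{e_G\})(\delta+\delta^{1/4})$, which tends to $\lambda$ as $\delta\to 0$.

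This is the same mechanism the paper uses in its key propositions. It is not circular, since Lindenstrauss's covering lemma is a purely combinatorial statement that does not depend on the Ornstein--Weiss lemma.
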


We shall use the following easy observation, whose proof is included for completeness.
	
	\begin{lemma}\label{prop-folnerseq}
		Fix $A\in\mathfrak{F}_G$  and let $F_n^A=F_n\cup A$ for each $n\in\mathbb N$. Then $\{F_n^A\}_{n\in\mathbb N}$ is again a F\o lner sequence. Moreover, if $\{F_n\}_{n\in\mathbb N}$ is tempered, then so is $\{F_n^A\}_{n\in\mathbb N}$.
	\end{lemma}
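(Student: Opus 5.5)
We have to check two things: that $\{F_n^A\}$ is Følner, and that temperedness passes from $\{F_n\}$ to $\{F_n^A\}$. Both rest on $|F_n|\to\infty$, noted in the introduction for any Følner sequence in a countably infinite amenable group. The extra set $A$ has fixed size $|A|$, so it becomes negligible relative to $|F_n|$.

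\emph{Følner property.} Fix $g\in G$. Since $F_n\cap gF_n\subset F_n^A\cap gF_n^A$ and $|F_n|\le|F_n^A|\le|F_n|+|A|$, we have
\[
1\ \ge\ \frac{|F_n^A\cap gF_n^A|}{|F_n^A|}\ \ge\ \frac{|F_n\cap gF_n|}{|F_n|+|A|}
=\frac{|F_n\cap gF_n|}{|F_n|}\cdot\frac{|F_n|}{|F_n|+|A|}.
\]
Both factors on the right tend to $1$, the second because $|F_n|\to\infty$.

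\emph{Temperedness.} Let $C>1$ be the constant from \eqref{eq:shulman} for $\{F_n\}$. For $n\ge2$, expand
\[
\bigcup_{k<n}(F_k^A)^{-1}F_n^A\subset\bigcup_{k<n}F_k^{-1}F_n\ \cup\ \bigcup_{k<n}F_k^{-1}A\ \cup\ A^{-1}F_n\ \cup\ A^{-1}A.
\]
We bound the four pieces separately.
\begin{itemize}
\item The first piece has size at most $C|F_n|$ by temperedness of $\{F_n\}$.
\item The third piece has size at most $|A|\,|F_n|$.
\item The fourth piece has size at most $|A|^2$, a constant.
\end{itemize}

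\emph{The main obstacle} is the second piece, $\bigcup_{k<n}F_k^{-1}A$, which involves all earlier sets $F_k$. Its size is at most $|A|\,\bigl|\bigcup_{k<n}F_k\bigr|$, so we must bound $\bigl|\bigcup_{k<n}F_k\bigr|$ linearly in $|F_n|$.

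To do this, fix $f\in F_n$. Then $\bigcup_{k<n}F_k^{-1}f\subset\bigcup_{k<n}F_k^{-1}F_n$, so $\bigl|\bigcup_{k<n}F_k^{-1}\bigr|\le C|F_n|$. Inversion is a bijection of $G$, hence $\bigl|\bigcup_{k<n}F_k\bigr|\le C|F_n|$ as well.

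Adding up, the union has size at most $(C+C|A|+|A|)|F_n|+|A|^2\le C'|F_n|$ with $C'=C+C|A|+|A|+|A|^2$, using $|F_n|\ge1$. Therefore $\{F_n^A\}$ is tempered with constant $C'>1$.
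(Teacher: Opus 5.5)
Your proof is correct and follows essentially the same route as the paper. It uses the same lower bound $|F_n\cap gF_n|/(|F_n|+|A|)$ for the F\o lner property and the same four-piece decomposition of $\bigcup_{k<n}(F_k^A)^{-1}F_n^A$, and it arrives at the same constant $C+C|A|+|A|+|A|^2=(C+|A|)(1+|A|)$. Your right-translation argument for $|\bigcup_{k<n}F_k^{-1}|\le C|F_n|$ justifies a bound the paper uses without comment; your final step also silently uses the trivial inequality $|F_n|\le|F_n^A|$.
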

	
	\begin{proof}
The easy fact that $\{F_n^A\}_{n\in\mathbb N}$ is a F\o lner sequence comes from the estimation below:
		$$
		\liminf_{n\to\infty}\frac{|gF_n^A\cap F_n^A|}{|F_n^A|}
		\geq
		\liminf_{n\to\infty}\frac{|gF_n\cap F_n|}{|F_n|+|A|}
		=1,\ \ \  \forall g\in G.
		$$		
Now if $\{F_n\}_{n\in\mathbb N}$ is a tempered F\o lner sequence with constant $C> 1$, then for each $i\geq 2$:
		$$
		\begin{aligned}
			\bigg|\bigcup_{j<i}(F_j^A)^{-1}F_i^A\bigg|
			&\leq
			\bigg|\bigcup_{j<i}F_j^{-1}F_i\bigg|
			+
			\bigg|\bigcup_{j<i}F_j^{-1}\bigg|\cdot |A|
			+
			|A|\cdot |F_i|
			+
			|A|^2 \\
			&\leq
			C|F_i|+C|F_i|\cdot |A|+|A|\cdot |F_i|+|A|^2 \\
			&\leq
			(C+|A|)(1+|A|)\,|F_i^A|.
		\end{aligned}
		$$
In particular, $\{F_n^A\}_{n\in\mathbb N}$ is also tempered, but with constant $(C+|A|)(1+|A|)$.
	\end{proof}

	The following combinatorial covering lemma due to Lindenstrauss \cite[Corollary 2.7]{Lindenstrauss2001} is central to our proofs.
	Recall that a family \(\mathcal{F} \subset \mathfrak{F}_G\) is \emph{\(\gamma\)-disjoint} (\(0 \le \gamma < 1\)) if for each \(A \in \mathcal{F}\) there exists \(A' \subset A\) with \(|A'| \ge (1-\gamma)|A|\) such that the sets \(A'\) are pairwise disjoint for different \(A \in \mathcal{F}\). In
	particular, a 0-disjoint family means equivalently a disjoint family.
	
	\begin{lemma}\label{coveringlem}
		For any \(\delta \in (0, 10^{-4})\), \(C > 0\), and \(D \in \mathfrak{F}_G\), there exists \(M \in \mathbb{N}\) large enough (depending on \(\delta, C, D\)) such that: Once \(H \in \mathfrak{F}_G\) and families \(\{H_{i,j}: i=1,\cdots,M, j=1,\cdots,N_i\}\) and \(\{A_{i,j}: i=1,\cdots,M, j=1,\cdots,N_i\}\) satisfy:
		\begin{enumerate}

			\item
			Letting \(H_{i,*} = \bigcup_{j=1}^{N_i} H_{i,j}\), we have for all \(i=1,\cdots,M, k=2,\cdots,N_i\),
			\[
			\left| \bigcup_{k'<k} H_{i,k'}^{-1} H_{i,k} \right| \le C |H_{i,k}|,
			\]
			and for all \(i=2,\cdots,M, k=1,\cdots,N_i\),
			\[
			\left| \bigcup_{i'<i} D H_{i',*}^{-1} H_{i,k} \right| \le (1+\delta) |H_{i,k}|.
			\]
			
			\item For all \(i=1,\cdots,M, j=1,\cdots,N_i\), we have \(H_{i,j} A_{i,j} \subset H\).
		\end{enumerate}
		Define \(A_{i,*} = \bigcup_{j=1}^{N_i} A_{i,j}\) and \(\alpha = \frac{\min_{1\le i \le M} |D A_{i,*}|}{|H|}\). Then the collection \(\{H_{i,j}a: a \in A_{i,j}, i=1,\cdots,M, j=1,\cdots,N_i\}\) admits a \(10\delta^{\frac{1}{4}}\)-disjoint subfamily \(\mathcal{F}\) satisfying \(|\cup \mathcal{F}| \ge (\alpha - \delta^{\frac{1}{4}}) |H|\).
	\end{lemma}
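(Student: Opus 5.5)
The plan is to follow Lindenstrauss's original argument: build the subfamily $\mathcal F$ in $M$ stages, processing the levels in decreasing order $i=M,M-1,\dots,1$. The second part of condition (1) says that each $H_{i,k}$ is $(DH_{i',*}^{-1},\delta)$-invariant for every $i'<i$, so higher levels consist of much larger sets. At stage $i$ we add to the current family $\mathcal F_{i+1}$ a subfamily of level-$i$ translates $\{H_{i,j}a: a\in A_{i,j}\}$. These new translates are chosen to sit mostly inside the part of $H$ not yet covered by $\bigcup\mathcal F_{i+1}$. We then trim each of them to its portion outside $\bigcup\mathcal F_{i+1}$. The aim is that every chosen set keeps a $(1-10\delta^{1/4})$-fraction of itself as a private part, which gives $10\delta^{1/4}$-disjointness.

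\textbf{Step 1: one level.} I would prove a one-level selection lemma using only the first part of condition (1), i.e. the tempered-type bound with constant $C$. Given a set $U\subset H$ (the still-uncovered part), consider the translates $H_{i,j}a$ with $a\in A_{i,j}$ for which $|H_{i,j}a\cap U|\ge(1-\delta^{1/2})|H_{i,j}a|$. The tempered condition is exactly what lets a random (or greedy Vitali-type) selection work, taking $j$ from $N_i$ downwards. It should yield a subfamily that is $\delta^{1/2}$-disjoint, lies essentially inside $U$, and covers at least $c(C)\,|\{\text{centres}\}|/|H|$ of $H$, with $c(C)>0$ depending only on $C$. The expectation computation compares $\sum|H_{i,j}a|$ with the overlaps, and the overlaps are controlled through $|\bigcup_{k'<k}H_{i,k'}^{-1}H_{i,k}|\le C|H_{i,k}|$. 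This is Lindenstrauss's basic lemma, and I would reproduce it essentially verbatim.

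\textbf{Step 2: iterating across levels.} Let $U_i=H\setminus\bigcup\mathcal F_{i+1}$ and $\theta_i=|U_i|/|H|$. By the $(DH_{i',*}^{-1},\delta)$-invariance of the larger sets already placed, the set $\bigcup\mathcal F_{i+1}$ is, up to a $\delta$-boundary error, a union of sets that are "thick" at scale $DH_{i,*}^{-1}$. Consequently, for all but a small proportion of $a\in A_{i,*}$, a level-$i$ translate centred at $a$ is either almost inside $\bigcup\mathcal F_{i+1}$ or almost inside $U_i$. Since $|DA_{i,*}|\ge\alpha|H|$, a Fubini-type count shows that the good centres (those landing in $U_i$) have density at least roughly $\theta_i-(1-\alpha)-O(\delta)$; this is where $D$ enters. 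Applying Step 1 gives
\[
\theta_{i-1}\le \theta_i - c(C)\bigl(\theta_i-(1-\alpha)\bigr)+O(\delta^{1/2}).
\]
Iterating this $M$ times gives $\theta_0\le(1-\alpha)+(1-c)^M+O(M\delta^{1/2})$. Finally choose $M\approx\log(\delta^{-1/4})/c(C)$. Then $(1-c)^M\le\delta^{1/4}/2$ and $M\delta^{1/2}\ll\delta^{1/4}$, using $\delta<10^{-4}$, which yields $|\bigcup\mathcal F|\ge(\alpha-\delta^{1/4})|H|$.

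\textbf{Main obstacle.} The hardest step will be keeping the disjointness loss uniform, at $10\delta^{1/4}$, across all $M$ stages. Each stage only trims against what was chosen before, but the errors from partial overlaps must not accumulate multiplicatively. I would handle this as follows: a level-$i$ set is accepted only if its intersection with $\bigcup\mathcal F_{i+1}$ is at most a $\delta^{1/4}$-fraction, and the private parts of earlier sets are never shrunk. The price is that discarded candidates reduce the covering gain; I would bound that loss by Markov's inequality together with the invariance estimate, and absorb it into the $O(\delta^{1/2})$ term above.
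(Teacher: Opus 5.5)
The paper does not prove this lemma; it quotes Lindenstrauss (Corollary~2.7). Your overall architecture is his: process the levels from $M$ down to $1$, make a within-level selection driven by the tempered bound, and use the $(DH_{i',*}^{-1},\delta)$-invariance of the larger sets already placed to control the centres you must discard. The gap is in the quantitative bookkeeping, and as written the argument does not close.

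First, your Step~1 constant cannot be independent of the disjointness tolerance $\epsilon$ (for you $\epsilon=\delta^{1/2}$). With one set per level the tempered hypothesis is vacuous, so take $F\subset[0,L)\subset\mathbb Z$ random of density $5\epsilon$ and centres $A=[0,L/2)$.
\begin{itemize}
\item For large $L$, any two translates $F+a$, $F+b$ with $a,b\in A$ meet in about $\frac{5\epsilon}{2}|F|>2\epsilon|F|$ points.
\item Hence an $\epsilon$-disjoint subfamily has at most one member, and it covers only $|F|\approx 10\epsilon|A|$.
\end{itemize}
Lindenstrauss's basic lemma indeed gives coverage of order the tolerance divided by $C$. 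You also lose a factor $|D|$ when passing from $|D\cdot\{\text{good centres}\}|$, which is what $\alpha$ controls, to the number of good centres.

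Consequently you need at least of order $\epsilon^{-1}\log(1/\delta)=\delta^{-1/2}\log(1/\delta)$ stages. An additive $O(\delta^{1/2})$ loss per stage then sums to order $\log(1/\delta)$, not $o(\delta^{1/4})$. Even granting your $c(C)$, the step ``$M\delta^{1/2}\ll\delta^{1/4}$ using $\delta<10^{-4}$'' fails. Your $M$ grows with $C$, while the lemma must hold for every $C>0$ at a fixed $\delta$.

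The missing idea is to charge every loss at stage $i$ relatively. Then it is multiplied by the step size, and the iteration contracts to a slightly shifted fixed point instead of accumulating over the $M$ stages.

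Let $S=\bigcup\mathcal F_{i+1}$ with $\mathcal F_{i+1}$ $\epsilon$-disjoint, and admit at level $i$ only centres $a\in A_{i,j}$ with $H_{i,j}a\cap S=\emptyset$. Then nothing is trimmed, and disjointness defects never compound across levels. The rejected centres lie in $H_{i,*}^{-1}S$, and the second half of condition~(1) gives
\[
|DH_{i,*}^{-1}S|\le(1+\delta)\sum_{F\in\mathcal F_{i+1}}|F|\le\frac{1+\delta}{1-\epsilon}\,|S| .
\]
So the admissible centres satisfy $|DA'_{i,*}|\ge\alpha|H|-(1+\kappa)|S|$, where $1+\kappa=\frac{1+\delta}{1-\epsilon}$.

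Apply the correct one-level lemma ($\epsilon$-disjoint output covering at least $c\,|A'_{i,*}|$ points) to $\{H_{i,j}\}_j$ and $\{A'_{i,j}\}_j$. With $s=|S|/|H|$ and $s'$ the covered fraction after stage $i$, this yields
\[
s'\ge s+c'\bigl(\alpha-(1+\kappa)s\bigr),\qquad c'=c/|D|.
\]
This iterates to $s_{\mathrm{final}}\ge\frac{\alpha}{1+\kappa}\bigl(1-(1-c'(1+\kappa))^{M}\bigr)$, however large $M$ is. So take $\epsilon$ a power of $\delta$ small enough that $\kappa$ is well below $\delta^{1/4}$. Then take $M=M(\delta,C,D)$ making the geometric term smaller than $\delta^{1/4}/2$.

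For the same reason, your tolerance of a $\delta^{1/4}$-fraction of overlap with earlier sets is too generous. It makes $\kappa$ at least of order $\delta^{1/4}$, which by itself uses up the entire $\delta^{1/4}$ margin.
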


\begin{remark}
Our three main results use above Lemma \ref{coveringlem} via the same general pattern: construct auxiliary sets $H_{i,j}$ and
 $A_{i,j}$ that encode the dynamical information at different scales, then verify the temperedness-like conditions of the lemma by choosing the sets
 $H_{i,j}$ appropriately, and finally extract the disjoint subfamily $\mathcal{F}$ and use its covering properties to obtain entropy
estimates. The differences between these proofs lie in how the sets $H_{i,j}$ are constructed and what dynamical information they encode.
\end{remark}
	
	We also employ the following result from \cite[Lemma 5.3]{Dou-Wang-Zhang2025}, which is a variant of a combinatorial lemma due to Lindenstrauss \cite[Lemma 4.2]{Lindenstrauss2001}.

	\begin{lemma}\label{combilem}
		Assume that the F\o lner sequence $\{F_{n}\}_{n\in\mathbb{N}}$ satisfies \eqref{1.1}. Then, for any $\eta>0$, there exists $N=N(\eta)\in \mathbb{N}$ such that for all indices $N\leq n_{1}<\cdots<n_{r}$, the number of $\frac{1}{2}$-disjoint sub-collections of $\{F_{n_{i}}a:F_{n_{i}}a\subset F_{n},i=1,\cdots,r\}$ is at most $2^{\eta|F_{n}|}$ for every sufficiently large $n\in \mathbb{N}$ (depending on $n_{1},\cdots,n_{r}$).
	\end{lemma}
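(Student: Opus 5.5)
The plan is to bound the number of $\frac12$-disjoint sub-collections by splitting it into two factors: the number of possible multiplicity vectors $(k_1,\dots,k_r)$, and for each such vector the number of ways to pick the translates. The key input is a weighted disjointness constraint, and the growth condition \eqref{1.1} is what makes the resulting entropy-type sum small uniformly in $r$.

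\emph{Step 1 (weighted constraint).} Let $\mathcal C$ be a $\frac12$-disjoint sub-collection of $\{F_{n_i}a\subset F_n\}$, and let $k_i$ be the number of members of $\mathcal C$ of the form $F_{n_i}a$. Each member keeps at least half of its points privately inside $F_n$, so
\[
\sum_{i=1}^r k_i\,|F_{n_i}|\le 2|F_n| .
\]
The translate $F_{n_i}a$ is determined by $a$, and $a$ ranges over a set of size at most $|F_n|$ (since $F_{n_i}a\subset F_n$ forces $a\in F_{n_i}^{-1}F_n$, which has at most $|F_n|$ usable elements once one fixes a point of $F_{n_i}$). Hence, for a fixed vector $(k_i)$, the number of admissible $\mathcal C$ is at most
\[
\prod_i \binom{|F_n|}{k_i}\le \prod_i\Big(\tfrac{e|F_n|}{k_i}\Big)^{k_i}.
\]
The number of vectors is at most $(2|F_n|+1)^r$. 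Since $r$ and $n_1,\dots,n_r$ are fixed and $|F_n|\to\infty$, this is $2^{o(|F_n|)}$, which is harmless once $n$ is large depending on $n_1,\dots,n_r$.

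\emph{Step 2 (reparametrize).} Write $k_i=x_i|F_n|/|F_{n_i}|$, so that $\sum_i x_i\le 2$. Then
\[
\frac{1}{|F_n|}\sum_i k_i\log\frac{e|F_n|}{k_i}
=\sum_i x_i\frac{\log(e|F_{n_i}|)}{|F_{n_i}|}+\sum_i \frac{x_i}{|F_{n_i}|}\log\frac1{x_i}.
\]
The first sum is at most $2\sup_{m\ge N}\log(e|F_m|)/|F_m|$. This is small for $N$ large because $|F_m|\to\infty$.

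\emph{Step 3 (the main obstacle).} The difficulty is the second sum: $r$ is arbitrary, and a naive bound $\sum_i 1/(e|F_{n_i}|)$ need not converge. Here I use \eqref{1.1}: given $L$, for $N$ large we have $|F_m|\ge L\log m$ for all $m\ge N$. Split the indices into two groups.
\begin{itemize}
\item If $x_i\ge n_i^{-2}$, then $\log(1/x_i)\le 2\log n_i\le (2/L)|F_{n_i}|$. These terms therefore contribute at most $(2/L)\sum_i x_i\le 4/L$.
\item If $x_i<n_i^{-2}$, then $x\log(1/x)$ is increasing on $(0,e^{-1})$, so the term is at most $2n_i^{-2}\log n_i$. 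Since the $n_i$ are distinct and at least $N$, these contribute at most $\sum_{m\ge N}2m^{-2}\log m$, which is small for large $N$.
\end{itemize}
Both bounds are independent of $r$.

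\emph{Conclusion.} Choose $L$ and then $N=N(\eta)$ so that the total exponent (in base $2$) is at most $\eta/2$. Then take $n$ large, depending on $n_1,\dots,n_r$, so that the vector count from Step 1 is at most $2^{\eta|F_n|/2}$. This gives the bound $2^{\eta|F_n|}$. I expect Step 3 to be the only delicate point: it is exactly where \eqref{1.1} enters, making the estimate uniform over arbitrarily many scales $n_i$.
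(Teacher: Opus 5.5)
Your argument is correct and complete, up to one small bookkeeping point below. The paper does not prove this lemma: it quotes it from \cite[Lemma 5.3]{Dou-Wang-Zhang2025}, a variant of \cite[Lemma 4.2]{Lindenstrauss2001}. There is therefore no internal proof to compare with.

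What you give is a self-contained proof of the cited statement by a counting scheme: the number of multiplicity vectors, at most $|F_n|$ admissible translates per scale (fix $f_0\in F_{n_i}$, so $a\in f_0^{-1}F_n$), and the weighted constraint $\sum_i k_i|F_{n_i}|\le 2|F_n|$ coming from $\frac{1}{2}$-disjointness.

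The only delicate issue is uniformity in $r$, and you handle it correctly. The reparametrization $k_i=x_i|F_n|/|F_{n_i}|$ with the split at $x_i=n_i^{-2}$ is in effect an elementary form of Gibbs' inequality with weights $n_i^{-2}$. It turns \eqref{1.1} into a per-scale cost of order $\log n_i/|F_{n_i}|\le 1/L$, plus a tail $\sum_{m\ge N}m^{-2}\log m$, both independent of $r$. The argument also shows that only \eqref{1.1} is used, not the F\o lner property or temperedness, which matches the hypothesis as stated.

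The bookkeeping fix concerns how $k_i$ is defined. A member of $\mathcal C$ may be of the form $F_{n_i}a$ for several indices $i$, which happens if some $F_{n_i}$ is a right translate of some $F_{n_j}$. With $k_i$ defined as "the number of members of the form $F_{n_i}a$", that member is counted more than once, and $\sum_i k_i|F_{n_i}|\le 2|F_n|$ can fail. To fix this, assign each member to a single index, say the least admissible one, and pick one representative $a$ per member. Then $\mathcal C\mapsto(S_1,\dots,S_r)$ is injective with $|S_i|=k_i$, and the rest of your estimate goes through unchanged.
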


	\subsection{Various entropies of amenable group actions}\
	
	In this subsection, we recall various entropies of amenable group actions used later.
	
	Denote by $\mathfrak{C}_{X}^{o}$ the set of all finite open covers of $X$. For $\mathcal{U}\in \mathfrak{C}_{X}^{o}$ and $\emptyset\neq K\subset X$, set $N(\mathcal{U},K)$ to be the minimal cardinality of sub-families of $\mathcal{U}$ covering $K$. For $\mathcal{U}, \mathcal{V}\in \mathfrak{C}_{X}^{o}$, write $\mathcal{V}\succeq\mathcal{U}$ if each element of $\mathcal{V}$ is contained in some element of $\mathcal{U}$.
 Set $\mathcal{U}\vee \mathcal{V}=\{U\cap V:\ U\in \mathcal{U},\ V\in \mathcal{V}\}$, which in fact works similarly for any finite collection of covers. Given $F\in \mathfrak{F}_{G}$, we use the notation $\mathcal{U}^{F}=\bigvee_{g\in F} g^{-1}\mathcal{U}$, where $g^{-1}\mathcal{U}=\{g^{-1}U:\ U\in \mathcal{U}\}$; and define
 $$
B_\eta^F(x)=\{y \in X: d(g x, g y) \leqslant \eta \text { for all } g \in F\}\ \ \  \text {for $x\in X$ and $\eta> 0$,}
$$
which is a closed ball with center $x$ and radius $\eta$ under the metric $d_F$ where
\[d_F(y, z)=\max_{g \in F} d(g y, g z)\ \ \ \ \ \text{for $y, z\in X$}.\]

	\subsubsection{Topological conditional entropy}
	
The notion of topological conditional entropy was first introduced by Misiurewicz \cite{Misiurewicz1976-SM}, and then
 in the more general setting of sofic groups by Chung and the second author of the present paper \cite{Chung-Zhang2015} (see also \cite{Zhou-Zhang-Chen2015} for the setting of amenable groups). Here we restrict ourselves to amenable group actions.

For $\mathcal{U}, \mathcal{V}\in \mathfrak{C}_{X}^{o}$, let $N(\mathcal{U}|\mathcal{V})=\max\{N(\mathcal{U},V):V\in \mathcal{V}\}$. It is easy to show that the function $\mathfrak{F}_{G}\ni F\mapsto \log N(\mathcal{U}^{F}|\mathcal{V}^{F})$ is invariant and subadditive \cite[Lemma 5.2]{Chung-Zhang2015}, so we could apply Ornstein-Weiss convergence lemma (Lemma \ref{OW-Lemma}) to define
\[ \begin{aligned} h(G,\mathcal{U}|\mathcal{V})=\lim_{n\to \infty}\frac{1}{|F_{n}|}\log N(\mathcal{U}^{F_{n}}|\mathcal{V}^{F_{n}}), \end{aligned} \] whose value is independent of the choice of the F\o lner sequence $\{F_{n}\}_{n\in \mathbb{N}}$.
	The \emph{topological conditional entropy of the $G$-action $(X,G)$} is defined as
\[
h^{*}(X,G)=\inf_{\mathcal{V}\in \mathfrak{C}_{X}^{o}}\sup_{\mathcal{U}\in \mathfrak{C}_{X}^{o}}h(G,\mathcal{U}|\mathcal{V}).
\]
	
	\subsubsection{Relative topological entropy}

Let $\pi: (X, G)\to (Y, G)$ be
a factor map between $G$-actions. For $\mathcal{U}\in \mathfrak{C}_{X}^{o}$, let
$N(\mathcal{U}|\pi)=\sup_{y\in Y}N(\mathcal{U},\pi^{-1}(y))$.  Again, the function $\mathfrak{F}_{G}\ni F\mapsto \log N(\mathcal{U}^{F}|\pi)$ is invariant and subadditive, so we
may apply Lemma \ref{OW-Lemma} to put
	\[
	h_{\mathrm{top}}(G,\mathcal{U}|\pi)=\lim_{n\to \infty}\frac{1}{|F_{n}|}\log N(\mathcal{U}^{F_{n}}|\pi),
	\]
and then we define the \emph{relative topological entropy relative to $\pi$} as
	\[
	h_{\mathrm{top}}(G,X|\pi)=\sup_{\mathcal{U}\in \mathfrak{C}_{X}^{o}}h_{\mathrm{top}}(G,\mathcal{U}|\pi).
	\]
When the $G$-action $(Y, G)$ is trivial, that is, $Y$ is a singleton, then $h_{\mathrm{top}}(G,X|\pi)$ is also denoted by $h_{\mathrm{top}}(G,X)$, and we call it the \emph{topological entropy of $G$-action $(X, G)$}.

	\subsubsection{Topological and dimensional entropies of subsets}
	
For  \(\emptyset\neq E\subset X\) and $\mathcal{U}\in\mathfrak{C}_{X}^{o}$, let
	\[
	h_{\mathrm{top}}(E,\mathcal{U},\{F_n\}_{n\in\mathbb N})=	\limsup_{n\to\infty}\frac{1}{|F_n|}\log N(\mathcal{U}^{F_n},E).
	\]
	The \emph{topological entropy of $E\subset X$ along the F\o lner sequence \(\{F_n\}_{n\in\mathbb N}\)} is defined as
	\[
	h_{\mathrm{top}}(E,\{F_n\}_{n\in\mathbb N})
	=
	\sup_{\mathcal{U}\in\mathfrak{C}_X^o}
	h_{\mathrm{top}}(E,\mathcal{U},\{F_n\}_{n\in\mathbb N}).
	\]
	
	We now turn to Bowen's dimensional entropy for subsets in amenable group actions, as introduced in \cite{Zheng-Chen2016}. Let  \(\emptyset \neq Z \subset X\), \(\mathcal{U} \in \mathfrak{C}_X^o\), and \(\lambda, \varepsilon > 0\). Put
	\[
	F_{\lambda, \varepsilon}(\mathcal{U}, Z, \{F_n\}_{n\in \mathbb{N}}) = \inf \sum_{i \in I} \exp(-\lambda |F_{n_i}|),
	\]
	where the infimum is taken over all countable families \(\{B_i\}_{i \in I}\) covering \(Z\) with each \(B_i \in \mathcal{U}^{F_{n_i}}\) for some \(|F_{n_i}| \ge \frac{1}{\varepsilon}\).
	Set $F_{\lambda,\varepsilon}(\mathcal{U},\emptyset,\{F_{n}\}_{n\in\mathbb{N}})=0$ by convention, and put
	\[
	F_{\lambda}(\mathcal{U},Z,\{F_{n}\}_{n\in\mathbb{N}})=\lim_{\varepsilon\to 0}F_{\lambda,\varepsilon}(\mathcal{U},Z,\{F_{n}\}_{n\in\mathbb{N}})=\sup_{\varepsilon>0}F_{\lambda,\varepsilon}(\mathcal{U},Z,\{F_{n}\}_{n\in\mathbb{N}}).
	\]
	In particular, $F_{\lambda}(\mathcal{U},Z,\{F_{n}\}_{n\in\mathbb{N}})\notin\{0,+\infty\}$ for at most one $\lambda>0$, so we may define
	\[
	\begin{aligned}
	h_{\mathrm{dim}}(Z,\mathcal{U},\{F_{n}\}_{n\in\mathbb{N}})&=\inf\{\lambda>0:F_{\lambda}(\mathcal{U},Z,\{F_{n}\}_{n\in\mathbb{N}})=0\}
		\\&=\sup\{\lambda>0:F_{\lambda}(\mathcal{U},Z,\{F_{n}\}_{n\in\mathbb{N}})=+\infty\},
	\end{aligned}
	\]
	where we set $\inf \emptyset=+\infty$ by convention. Thus for every $\lambda> h_{\mathrm{dim}}(Z,\mathcal{U},\{F_{n}\}_{n\in\mathbb{N}})$ one has $F_{\lambda}(\mathcal{U},Z,\{F_{n}\}_{n\in\mathbb{N}})=0$.
	Then the \emph{Bowen's dimensional entropy of \(Z\) along the F\o lner sequence $\{F_n\}_{n\in\mathbb{N}}$} is
	defined as
	\[
	h_{\mathrm{dim}}(Z, \{F_n\}_{n\in \mathbb{N}}) = \sup_{\mathcal{U} \in \mathfrak{C}_X^o} h_{\mathrm{dim}}(Z, \mathcal{U}, \{F_n\}_{n\in \mathbb{N}}).
	\]

The proof of our main theorems will require us to pass from \(\{F_n\}_{n\in\mathbb N}\)
	to a slightly enlarged F\o lner sequence. The next proposition shows that such a
	finite enlargement does not affect the topological or dimensional
	entropies of subsets with respect to a fixed open cover.
This invariance of entropy under finite F\o lner enlargement is a technical tool used throughout.
	
	\begin{prop}\label{prop16}
		Let $A\in\mathfrak{F}_G$ and $\emptyset\neq E\subset X$, $\mathcal{U}\in\mathfrak{C}_{X}^{o}$. Then
\begin{eqnarray*}
h_{\mathrm{top}}(E,\mathcal{U},\{F_{n}^{A}\}_{n\in \mathbb{N}}) & = & h_{\mathrm{top}}(E,\mathcal{U},\{F_{n}\}_{n\in \mathbb{N}}),\\ h_{\mathrm{dim}}(E,\mathcal{U},\{F_{n}^{A}\}_{n\in \mathbb{N}}) & = & h_{\mathrm{dim}}(E,\mathcal{U},\{F_{n}\}_{n\in \mathbb{N}}),
\end{eqnarray*}
and hence \(
h_{\mathrm{top}}(E,\{F_{n}^{A}\}_{n\in \mathbb{N}}) = h_{\mathrm{top}}(E,\{F_{n}\}_{n\in \mathbb{N}})\), \( h_{\mathrm{dim}}(E,\{F_{n}^{A}\}_{n\in \mathbb{N}}) = h_{\mathrm{dim}}(E,\{F_{n}\}_{n\in \mathbb{N}}).\)
	\end{prop}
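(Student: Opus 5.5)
The plan is to compare covers for $F_n$ and $F_n^A$ element by element, using that the refinement $\mathcal{U}^{F_n^A}$ differs from $\mathcal{U}^{F_n}$ only by the finitely many coordinates in $A$. The two basic facts are
\[
N(\mathcal{U}^{F_n},E)\le N(\mathcal{U}^{F_n^A},E)\le N(\mathcal{U}^{F_n},E)\cdot |\mathcal{U}|^{|A|},
\]
since $F_n\subset F_n^A$ and $\mathcal{U}^{F_n^A}\succeq \mathcal{U}^{F_n}$, while $\mathcal{U}^{F_n^A}=\mathcal{U}^{F_n}\vee\mathcal{U}^{A\setminus F_n}$. Moreover,
\[
|F_n|\le |F_n^A|\le |F_n|+|A|,
\]
and $|F_n|\to\infty$, so $|F_n^A|/|F_n|\to 1$. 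For topological entropy this settles the claim: divide the logarithms by $|F_n^A|$ resp. $|F_n|$. The term $|A|\log|\mathcal{U}|$ is negligible and the normalizations are asymptotically equal, so the two limsups coincide.

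For dimensional entropy I will compare $F_{\lambda,\varepsilon}$ quantities in both directions.

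\emph{First direction} ($h_{\dim}$ along $\{F_n\}$ $\le$ along $\{F_n^A\}$). Take a cover $\{B_i\}$ of $Z$ with $B_i\in\mathcal{U}^{F^A_{n_i}}$ and $|F^A_{n_i}|\ge 1/\varepsilon$. Each $B_i$ lies in some $B_i'\in\mathcal{U}^{F_{n_i}}$, and $|F_{n_i}|\ge 1/\varepsilon-|A|$. Given $\lambda'<\lambda$, for $\varepsilon$ small we get
\[
\exp(-\lambda'|F_{n_i}|)\le \exp(\lambda'|A|)\exp(-\lambda'|F^A_{n_i}|).
\]
Hence $F_{\lambda',\varepsilon'}(\mathcal{U},Z,\{F_n\})\le e^{\lambda'|A|}F_{\lambda',\varepsilon}(\mathcal{U},Z,\{F^A_n\})$ with $1/\varepsilon'=1/\varepsilon-|A|$, which tends to $\infty$ as $\varepsilon\to0$. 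So $F_{\lambda'}$ vanishing along $\{F^A_n\}$ forces it to vanish along $\{F_n\}$, and this direction is immediate.

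\emph{Second direction} (along $\{F_n^A\}$ $\le$ along $\{F_n\}$). Replacing each $B_i\in\mathcal{U}^{F_{n_i}}$ by the at most $|\mathcal{U}|^{|A|}$ pieces $B_i\cap V$, $V\in\mathcal{U}^{A\setminus F_{n_i}}$, multiplies the count by a constant $K=|\mathcal{U}|^{|A|}$. It also multiplies weights by at most $e^{\lambda|A|}$, because $|F^A_{n_i}|\le|F_{n_i}|+|A|$. Thus
\[
F_{\lambda,\varepsilon}(\mathcal{U},Z,\{F^A_n\})\le K e^{\lambda|A|}F_{\lambda,\varepsilon}(\mathcal{U},Z,\{F_n\}),
\]
using that $|F_{n_i}|\ge1/\varepsilon$ implies $|F^A_{n_i}|\ge 1/\varepsilon$. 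So $F_\lambda=0$ along $\{F_n\}$ implies $F_\lambda=0$ along $\{F_n^A\}$. A constant factor does not affect whether $F_\lambda$ is $0$, so no loss in $\lambda$ is even needed here.

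Combining both directions gives the equality for each $\mathcal{U}$, and taking the supremum over $\mathcal{U}\in\mathfrak{C}^o_X$ gives the final identities. The only mildly delicate point is the threshold bookkeeping $|F_{n_i}|\ge 1/\varepsilon$ versus $|F^A_{n_i}|\ge1/\varepsilon$ in the first direction. It is handled by shifting $\varepsilon$ as above, which is harmless since $F_\lambda$ is a limit as $\varepsilon\to0$.
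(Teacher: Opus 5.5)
Your proposal is correct and follows essentially the same route as the paper. For $h_{\mathrm{top}}$ you both use the sandwich $N(\mathcal{U}^{F_n},E)\le N(\mathcal{U}^{F_n^A},E)\le |\mathcal{U}|^{|A|}N(\mathcal{U}^{F_n},E)$. For $h_{\mathrm{dim}}$, one inequality comes from refining each cover element by the extra coordinates in $A$ at a cost of $|\mathcal{U}|^{|A|}$. The other comes from coarsening to $\mathcal{U}^{F_{n_i}}$, shifting the threshold from $1/\varepsilon$ to $1/\varepsilon-|A|$, at a cost of $e^{\lambda|A|}$.

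The remaining differences are cosmetic. You make explicit the normalization $|F_n^A|/|F_n|\to 1$, which the paper leaves implicit. The factor $e^{\lambda|A|}$ in your second direction is superfluous: the weights only decrease, because $|F^A_{n_i}|\ge|F_{n_i}|$, not for the reason you give. The auxiliary $\lambda'<\lambda$ in your first direction plays no role.
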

	\begin{proof}
		Note that $\{F_{n}^{A}\}_{n\in \mathbb{N}}$ is still a F\o lner sequence by Lemma \ref{prop-folnerseq}, so the notations $h_{\mathrm{top}}(E,\mathcal{U},\{F_{n}^{A}\}_{n\in \mathbb{N}})$ and $h_{\mathrm{dim}}(E,\mathcal{U},\{F_{n}^{A}\}_{n\in \mathbb{N}})$ are well defined. Then the first identity of the conclusion comes directly from the following fact
		\[
		N(\mathcal{U}^{F_{n}},E)\leq N(\mathcal{U}^{F_{n}^{A}},E)\leq |\mathcal{U}|^{|A|}N(\mathcal{U}^{F_{n}},E).
		\]

Now let us prove firstly the direction $\le$ in the second identity. We pick arbitrarily
		$
		\lambda>h_{\mathrm{dim}}(E,\mathcal{U},\{F_n\}_{n\in\mathbb N}).
		$
		Then for every $\varepsilon>0$ one has
		\(
		F_{\lambda,\varepsilon}(\mathcal{U},E,\{F_n\}_{n\in\mathbb{N}})=0.
		\)
		Therefore, there exists a countable cover $\{Z_i\}_{i\in I}$ of $E$, with each \(Z_i \in \mathcal{U}^{F_{n_i}}\) for some \(|F_{n_i}| \ge \frac{1}{\varepsilon}\) such that
		$
		\sum_{i\in I}\mathrm{e}^{-\lambda|F_{n_i}|}\leq \varepsilon.
		$
Note that $Z_i\cap U\ (i\in I, U\in\mathcal{U}^A)$ forms a countable cover of $E$, and that by the above construction each $Z_i\cap U$ is contained in some element of $\mathcal{U}^{F_{n_i}^A}$. Thus
\begin{eqnarray*}
F_{\lambda,\varepsilon}(\mathcal{U},E,\{F_n^A\}_{n\in\mathbb N})
		& \leq & \sum_{i\in I}\sum_{U\in\mathcal{U}^A}\mathrm{e}^{-\lambda|F_{n_{i}}^{A}|} \\
& \leq & |\mathcal{U}|^{|A|}\sum_{i\in I}\mathrm{e}^{-\lambda|F_{n_i}^A|}
		\leq
		|\mathcal{U}|^{|A|}\sum_{i\in I}\mathrm{e}^{-\lambda|F_{n_i}|}
		\leq
		|\mathcal{U}|^{|A|}\cdot \varepsilon.
		\end{eqnarray*}
	By letting $\varepsilon\to 0$, one has
		$
		F_\lambda(\mathcal{U},E,\{F_n^A\}_{n\in\mathbb N})=0
		$
		and then
		$h_{\mathrm{dim}}(E,\mathcal{U},\{F_n^A\}_{n\in\mathbb N})\leq \lambda,$
		so $$
		h_{\mathrm{dim}}(E,\mathcal{U},\{F_n^A\}_{n\in\mathbb N})\leq
		h_{\mathrm{dim}}(E,\mathcal{U},\{F_n\}_{n\in\mathbb N}).
		$$

To finish the proof, it remains to prove the direction $\ge$ in the second identity. Fix any
		$
		\eta> h_{\mathrm{dim}}(E,\mathcal{U},\{F_n^A\}_{n\in\mathbb N}).
		$
		Then, for every $\varepsilon\in(0,|A|^{-1})$, one has
		\(
		F_{\eta,\varepsilon}(\mathcal{U},E,\{F_n^A\}_{n\in\mathbb{N}})=0,
		\)
		and so there exists a countable cover $\{W_j\}_{j\in J}$ of $E$ with each \(W_j \in \mathcal{U}^{F_{m_j}^A}\) for some \(|F_{m_j}^A| \ge \frac{1}{\varepsilon}\) such that
		$
		\sum_{j\in J}e^{-\eta|F_{m_j}^A|}\leq \varepsilon.
		$
		Note that in this case
		\[
		|F_{m_j}|\ge |F_{m_j}^A|-|A|\ge (\varepsilon^*)^{- 1}\quad \text{where}\quad \varepsilon^*\doteq \left(\frac{1-|A|\varepsilon}{\varepsilon}\right)^{- 1},
		\]
		moreover, each set $W_j$ is contained in some $C_j\in \mathcal{U}^{F_{m_j}}$ for every $j\in J$.
Thus, the countable family $\{C_j\}_{j\in J}$ is again a cover of $E$, and consequently
		$$
		F_{\eta,\varepsilon^*}(\mathcal{U},E,\{F_n\}_{n\in\mathbb N})
		\le
		\sum_{j\in J}e^{-\eta|F_{m_j}|}
		\le
		e^{|A|\eta}\sum_{j\in J}e^{-\eta|F_{m_j}^A|}
		\le
		e^{|A|\eta}\cdot \varepsilon.
		$$
		By taking $\varepsilon\to 0$ (and then $\varepsilon^*\to 0$) one obtains
		$
		F_\eta(\mathcal{U},E,\{F_n\}_{n\in\mathbb N})=0,
		$
and then $
		h_{\mathrm{dim}}(E,\mathcal{U},\{F_n\}_{n\in\mathbb N})\leq \eta
		$
and finally \(
		h_{\mathrm{dim}}(E,\mathcal{U},\{F_n\}_{n\in\mathbb N})\leq h_{\mathrm{dim}}(E,\mathcal{U},\{F_n^A\}_{n\in\mathbb N}).
		\)
	\end{proof}

	\section{Proof of Theorem \ref{thm2}}\label{conditional}
	
	In this section we prove Theorem \ref{thm2} relying crucially on the following key Proposition \ref{prop14} (that is, \cite[Proposition 4.1]{He-Zhang-Zhang2026-preprint}), which provides a
	uniform covering estimate needed to control the topological conditional entropy
	via the dimensional entropy of stable sets, and hence converts the local stable-set entropy information into
a global covering bound.

	\begin{prop}\label{prop14}
Let $\{F_{n}\}_{n\in\mathbb{N}}$ be a tempered F\o lner sequence satisfying the growth condition \eqref{1.1}, and $e_G\in F_n$ for every $n\in\mathbb N$. Let \(\mathcal{U} \in \mathfrak{C}_X^o\) and \(\eta > 0\). If $\lambda\in \mathbb{R}$ satisfies
		\[
		\sup_{x \in X} h_{\mathrm{dim}}(\Gamma_\eta(x), \mathcal{U}, \{F_n\}_{n\in \mathbb{N}}) < \lambda,
		\]
then, for each sufficiently small \(\delta > 0\), there exists \(P \in \mathbb{N}\) such that, for all \(n > P\),
		\[
		\sup_{x \in X} N(\mathcal{U}^{F_n}, B_\eta^{F_n}(x)) \le |\mathcal{U}|^{(\delta + \delta^{\frac{1}{4}})|F_n|}\cdot \mathrm{e}^{\lambda |F_n|} \cdot 2^{\delta |F_n|}.
		\]
	\end{prop}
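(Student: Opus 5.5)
We argue by contradiction-free direct covering, following Lindenstrauss' proof of the Shannon--McMillan--Breiman type estimates. Fix $\delta>0$ small, with $\delta<10^{-4}$, and let $C$ be the temperedness constant. For each $x\in X$, since $h_{\mathrm{dim}}(\Gamma_\eta(x),\mathcal{U},\{F_n\})<\lambda$, we have $F_{\lambda}(\mathcal{U},\Gamma_\eta(x),\{F_n\})=0$. Hence for any prescribed threshold $N_0$ there is a countable cover of $\Gamma_\eta(x)$ by sets $B_i\in\mathcal{U}^{F_{n_i}}$ with $n_i\ge N_0$ and $\sum_i e^{-\lambda|F_{n_i}|}<1$. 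The first step is to make this uniform and finite. $\Gamma_\eta(x)=\bigcap_{m}B^{F_m'}_\eta(x)$ for an increasing exhaustion, and we replace $\mathcal{U}$ by closed shrinkings or use a Lebesgue number argument. By compactness of $\Gamma_\eta(x)$ and openness of the $B_i$, finitely many $B_i$ cover a neighbourhood of $\Gamma_\eta(x)$, and that neighbourhood contains $B_\eta^{K}(y)$ for all $y$ near $x$ and some finite $K\subset G$. Here we use upper semicontinuity of $y\mapsto B^K_\eta(y)$ in the Hausdorff sense, which requires a small slack in $\eta$; I would handle this by proving the statement first with $\Gamma_{\eta}$ replaced by closed stable sets and $\eta$ adjusted, or by noting that the finite intersection over $K$ converges. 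A further compactness argument over $x\in X$ then gives finite data $N_0\le n\le L$, a finite set $K$, and finite families such that every $B^K_\eta(y)$ is covered by elements of $\bigcup_{N_0\le n\le L}\mathcal{U}^{F_n}$ with weighted sum $\sum e^{-\lambda|F_{n_i}|}\le 1$.

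The second step is the dynamical transport. For $x\in X$ and large $n$, consider $z\in B^{F_n}_\eta(x)$ and $g\in F_n$ with $Kg\subset F_n$; this holds for most $g$ by the Følner property. Then $gz\in B^K_\eta(gx)$, so $gz$ lies in some chosen set $B\in\mathcal{U}^{F_{m}}$ from the finite cover of $B^K_\eta(gx)$. This assigns to $z$ and $g$ a translate $F_mg$ together with a "name" in $\mathcal{U}^{F_mg}$. Using the weights, we split scales into $M$ blocks, $N_0=L_0<L_1<\dots<L_M$, chosen so that the second condition of Lemma~\ref{coveringlem} holds with $D=K$. This is possible by temperedness and the Følner property, as in Lindenstrauss. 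Applying Lemma~\ref{coveringlem} to each $z$ with $H=F_n$ (using $e_G\in F_n$ for the containment condition), we obtain a $10\delta^{1/4}$-disjoint subfamily of translates $F_{m}g$ covering at least $(1-\delta-\delta^{1/4})|F_n|$ of $F_n$; the density $\alpha\ge 1-\delta$ comes from the Følner property.

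The third step is counting, and this is where the bound arises. An element of $\mathcal{U}^{F_n}$ containing $z$ is determined by three pieces of data. The first is the $\tfrac12$-disjoint subcollection of translates; by Lemma~\ref{combilem}, which needs \eqref{1.1}, there are at most $2^{\delta|F_n|}$ choices. The second is, for each translate $F_mg$, the chosen element $B$, counted with the weight bound. The third is free choices of $\mathcal{U}$ on the uncovered part together with the overlaps, giving $|\mathcal{U}|^{(\delta+\delta^{1/4})|F_n|}$. Since $x$ is fixed, the candidates at position $g$ come from the finite cover attached to $gx$. Summing over subcollections, the products of weights satisfy $\prod_j e^{\lambda|F_{m_j}|}\cdot\prod_j e^{-\lambda|F_{m_j}|}$-type bounds. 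Concretely, the number of name choices is at most $\sum\prod_j \#\{B\}$, controlled via $\sum_B e^{-\lambda|F_m|}\le1$ by $e^{\lambda\sum_j|F_{m_j}|}\le e^{\lambda|F_n|}$, after accounting for the near-disjointness, or after restricting to the disjoint parts and absorbing overlaps into the $|\mathcal{U}|$ factor.

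The main obstacle is this weighted counting. The covers have varying lengths $m$, and a naive count of chosen sets gives $\prod_j(\text{number of sets at }gx)$, which is not bounded by $e^{\lambda|F_{m_j}|}$. I would resolve it by first reducing, for each scale $m$, to covers where all sets have the same length. This uses pigeonholing over the finitely many scales in $[N_0,L]$, where the number of sets of length $m$ is at most $e^{\lambda|F_m|}$ since each term satisfies $e^{-\lambda|F_m|}\le$ the total sum $\le 1$. I would also let the choice of scale at $g$ be part of the data recorded by the subcollection. Then each block contributes at most $e^{\lambda|F_{m_j}|}$, and disjointness yields $e^{\lambda|F_n|}$. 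The uniform compactness in step one is the other delicate point, and I expect it to need the slack between $\Gamma_\eta$ and Bowen balls $B^{K}_\eta$ handled carefully.
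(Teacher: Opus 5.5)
Your architecture is the right one. It is the scheme the paper carries out for Proposition~\ref{prop15}; for Proposition~\ref{prop14} itself the paper only cites \cite{He-Zhang-Zhang2026-preprint}. The scheme is: uniform finite covers by compactness; the transport $gz\in B^{K}_\eta(gx)$, so that the candidate names at $g$ depend only on $gx$ and not on $z$; Lemma~\ref{coveringlem} applied separately to each $z$; grouping the $z$'s by their subfamily via Lemma~\ref{combilem}; and counting names with the weights.

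\textbf{The levels.} The multi-scale step fails as you describe it. You build \emph{one} finite family with scales in $[N_0,L]$, so each pair $(z,g)$ receives a single scale, and you then cut $[N_0,L]$ into $M$ blocks. Lemma~\ref{coveringlem} needs $\alpha=\min_i|DA_{i,*}|/|F_n|\ge 1-\delta$. That is, at \emph{every} level $i$, almost every $g$ must carry a translate from that level. With scales coming from a single cover, nothing forces $g$ into more than one $A_{i,*}$, so in general $\min_i|A_{i,*}|\le|F_n|/M$ and the lemma gives nothing. You must instead repeat your step one $M$ times. At level $i$, take the threshold $1/\varepsilon_i$ so large that all scales of level $i$ exceed those of earlier levels and satisfy the $(1+\delta)$-condition against them. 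This yields $M$ independent finite families, each with its own finite $K_i$ and finite open cover $\{V_{i,q}\}$ of $X$. Then require both $H_{i,j}g\subset F_n$ and $K_ig\subset F_n$ in the definition of $F_n^*$. This is exactly the role of the $\varepsilon_i$ and of \eqref{construction} in the proof of Proposition~\ref{prop15}.

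\textbf{The exponent.} Your counting, as written, does not give $\mathrm{e}^{\lambda|F_n|}$. Since $\mathcal F_z$ is only $10\delta^{1/4}$-disjoint, you get $\mathrm{e}^{\lambda\sum_F|F|}$ with $\sum_F|F|\le|\cup\mathcal F|/(1-10\delta^{1/4})$. This excess cannot be moved into the $|\mathcal U|$-factor, for two reasons. Restricting the candidate names on $F$ to a disjoint core $F'\subset F$ does not reduce their number below $\mathrm{e}^{\lambda|F|}$. Also, the complement of the cores is only bounded by roughly $(\delta+11\delta^{1/4})|F_n|$, not by $(\delta+\delta^{1/4})|F_n|$.

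\textbf{The base point.} Recording the scale at $g$ is not free either. Lemma~\ref{combilem} counts collections of \emph{sets} $F_ma$. A set $F\in\mathcal F$ determines the base point $a$ only up to $|F|$ possibilities, and $a$ is what fixes which $V_{i,q}\ni ax$ and which family of names is used. The bound $|F|$ uses $e_G\in F_m$ (so $a\in F$), pairwise distinct $F_{R_{i,j}}$ within a level, and levels separated in cardinality so that $i$ can be read off from $|F|$. This is where $e_G\in F_n$ enters. The containment $H_{i,j}A_{i,j}\subset F_n$ comes from the definition of $F_n^*$, not from $e_G\in F_n$ as you suggest.

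Both losses are absorbed by building in a margin at the start, as the paper does:
\begin{enumerate}
\item Fix $\delta$ and choose $\lambda^*$ with $\sup_x h_{\mathrm{dim}}(\Gamma_\eta(x),\mathcal U,\{F_n\}_{n\in\mathbb N})<\lambda^*<\lambda_\delta:=(1-10\delta^{1/4})\lambda$. This is why $\delta$ must be small.
\item Build all covers with weight $\lambda^*$ instead of $\lambda$.
\item Take all scales at least $N(\delta)$ from Lemma~\ref{combilem}, and so large that $|F|\le\mathrm{e}^{(\lambda_\delta-\lambda^*)|F|}$.
\end{enumerate}
Then each $F\in\mathcal F$ costs at most $\mathrm{e}^{\lambda_\delta|F|}$, and the product is at most $\mathrm{e}^{\lambda|F_n|}$.

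\textbf{Step one.} This step needs no slack in $\eta$, and adjusting $\eta$ would not prove the statement, since $\eta$ is the same on both sides. The set $\{(y,z):z\in B^K_\eta(y)\}$ is closed. So once a finite $K_x$ with $B^{K_x}_\eta(x)\subset U(x)$ is found from $\Gamma_\eta(x)=\bigcap_K B^K_\eta(x)$, some neighbourhood $V(x)$ satisfies $B^{K_x}_\eta(y)\subset U(x)$ for all $y\in V(x)$.
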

	
Now we are ready to prove Theorem~\ref{thm2}.

	\begin{proof}[Proof of Theorem~\ref{thm2}]
		By \cite[Theorem~2.2 and Proposition~5.1]{Dou-Wang-Zhang2025}, one already has
		\begin{equation}\label{thm2ineq}
			h^{*}(X,G)\geq \lim_{\varepsilon\to 0}\sup_{x\in X}
			h_{\mathrm{dim}}(\Gamma_{\varepsilon}^{\{F_{n}\}_{n\in\mathbb{N}}}(x),\{F_{n}\}_{n\in\mathbb{N}})
			\geq \lim_{\varepsilon\to 0}\sup_{x\in X}
			h_{\mathrm{dim}}(\Gamma_{\varepsilon}(x),\{F_{n}\}_{n\in\mathbb{N}})
		\end{equation}
		for any F\o lner sequence. Therefore, it remains to prove
		\begin{equation}\label{conditional-estimate}
			h^{*}(X,G)\leq \lim_{\varepsilon\to 0}\sup_{x\in X}
			h_{\mathrm{dim}}(\Gamma_{\varepsilon}(x),\{F_{n}\}_{n\in\mathbb{N}})
		\end{equation}
		under the assumption that the F\o lner sequence $\{F_n\}_{n\in\mathbb{N}}$ is tempered and satisfies the growth condition (\ref{1.1}). We may assume that the right-hand side of (\ref{conditional-estimate}) is finite,
otherwise the conclusion is trivial.
		To prove (\ref{conditional-estimate}),
fix arbitrarily given
\[\lambda>\lim_{\varepsilon\to 0}\sup_{x\in X}h_{\mathrm{dim}}(\Gamma_{\varepsilon}(x),\{F_n\}_{n\in\mathbb N}),\]
it suffices to show \(h^{*}(X,G)\leq \lambda\).
Note that, by the definition of Bowen's dimensional entropy and Proposition \ref{prop16}, there exists \(\eta>0\) such that for every \(\mathcal U\in\mathfrak C_X^o\),
		\[
		\sup_{x\in X}h_{\mathrm{dim}}(\Gamma_{\eta}(x),\mathcal{U},\{\widetilde F_n\}_{n\in\mathbb N})\le \lambda,
		\]
		where $
		\widetilde F_n:=F_n\cup\{e_G\}$, $n\in\mathbb N.
		$ With the help of Lemma \ref{prop-folnerseq},  \(\{\widetilde F_n\}_{n\in\mathbb N}\) is still a tempered F\o lner
		sequence satisfying \eqref{1.1}. Now let $\mathcal U\in\mathfrak C_X^o$. By Proposition~\ref{prop14}, for every sufficiently small $\delta>0$, there exists $P\in\mathbb N$ such that for all $n>P$,
		\[
		\sup_{x\in X}N(\mathcal U^{\widetilde F_n},B_\eta^{\widetilde F_n}(x))
		\le
		|\mathcal U|^{(\delta+\delta^{\frac14})|\widetilde F_n|}\cdot
		\mathrm e^{\lambda|\widetilde F_n|}\cdot
		2^{\delta|\widetilde F_n|}.
		\]
		Choose $\mathcal V\in\mathfrak C_X^o$ with
		$
		\operatorname{diam}(\mathcal V):=\max\{\operatorname{diam}(V):V\in\mathcal V\}<\eta.
		$
		Then every element of \(\mathcal V^{\widetilde F_n}\) is contained in
	some Bowen ball	\(B_\eta^{\widetilde F_n}(x)\), and hence, for all $n>P$,
		\[
		N(\mathcal U^{\widetilde F_n}\mid \mathcal V^{\widetilde F_n})
		\le
		\sup_{x\in X}N(\mathcal U^{\widetilde F_n},B_\eta^{\widetilde F_n}(x))
		\le
		|\mathcal U|^{(\delta+\delta^{\frac14})|\widetilde F_n|}\cdot
		\mathrm e^{\lambda|\widetilde F_n|}\cdot
		2^{\delta|\widetilde F_n|}.
		\]
As the value $h(G,\mathcal U|\mathcal V)$ is independent of the F\o lner sequence,
by taking logarithms, dividing by \(|\widetilde F_n|\), taking the \(\limsup\) as \(n \to \infty\), and then letting \(\delta \to 0\), one has
		\[
		h(G,\mathcal U|\mathcal V)
		=
		\lim_{n\to\infty}
		\frac{1}{|\widetilde F_n|}
		\log N(\mathcal U^{\widetilde F_n}|\mathcal V^{\widetilde F_n})
		\le \lambda.
		\]
Since $\mathcal U\in\mathfrak C_X^o$ is arbitrary, it follows
	\[h^{*}(X,G)
		\le \sup_{\mathcal{U}\in \mathfrak{C}_{X}^{o}}h(G,\mathcal{U}|\mathcal{V}) \le
		\lambda,
		\]
and then one obtains easily the desired inequality \eqref{conditional-estimate}.
	\end{proof}

	\section{Proof of Theorem \ref{thm4}}\label{inequality for dim}
	
Having established the dimensional entropy characterization of topological conditional entropy via stable sets
in previous section, we now turn to the behavior of dimensional entropy under factor maps and prove Theorem~\ref{thm4}.

We may assume, without loss of generality, that $e_G\in F_n$ for all $n\in\mathbb{N}$. Otherwise, we set $F_n^*= F_n\cup \{e_G\}$ for each $n\in \mathbb{N}$, then $\{F_n^*\}_{n\in\mathbb N}$ is also a F\o lner sequence (Lemma~\ref{prop-folnerseq}). Instead what we shall prove is that
$$h_{\mathrm{dim}}(E,\{F_n^*\}_{n\in\mathbb N})
	\leq
	h_{\mathrm{dim}}(\pi(E),\{F_n^*\}_{n\in\mathbb N})
	+
	\sup_{y\in Y}h_{\mathrm{top}}(\pi^{-1}(y),\{F_n^*\}_{n\in\mathbb N}),
	$$
and then the required conclusion follows directly by applying Proposition \ref{prop16}.

\smallskip

The proof of Theorem~\ref{thm4}
proceeds by combining fibrewise covering
estimates (coming from the relative topological entropy assumption on the fibres)
with a dimensional entropy bound on the image set.
The new difficulty here is that we must simultaneously control the covering numbers on the fibres and dimensional entropy of the image set,
which requires a more delicate balance in the application of the Lindenstrauss covering lemma.
The key Proposition \ref{section-3} shows
how to combine these two pieces of information via Lemma \ref{coveringlem}.

\begin{prop} \label{section-3}
Let $\pi: (X, G)\to (Y, G)$ be a factor map between $G$-actions. Let \(\mathcal{U}
\in \mathfrak{C}_X^o\), \(\emptyset\neq E\subset X\) and real numbers $\beta, \tau$ satisfy \(\beta\ge 0, \tau > 0\). Assume that there is
\begin{enumerate}

\item a tempered F\o lner sequence $\{F_n^{(1)}\}_{n\in\mathbb N}$, which is a subsequence of \(\{F_n\}_{n\in \mathbb{N}}\), with constant $C> 1$, and

\item for each $K\in \N$ and \(\eta = \frac{\tau}{6}\), there exists a family $\{(V(y),R(y))\}_{y\in Y}$ of
open neighbourhoods $V(y)\subset Y$ and integers $R(y)\in\N$ satisfying that, for each $y\in Y$, $R (y)\ge K$ and $\pi^{-1}(V(y))$ can be covered by at most
$e^{(\beta+\eta)|F^{(1)}_{R(y)}|}$ elements of $\mathcal{U}^{F^{(1)}_{R(y)}}$.
\end{enumerate}
Let $\lambda\in \mathbb{R}$ satisfy \(h_{\mathrm{dim}}(\pi (E), \{F_n\}_{n\in \mathbb{N}}) < \lambda\). Then
\[F_{\lambda+\beta+\tau}(\mathcal{U},E,\{F_{n}\}_{n\in\mathbb{N}})= 0.\]
\end{prop}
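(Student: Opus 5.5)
The plan is to build, for $\pi(E)$, a cover by ``cylinders'' of a suitable open cover $\mathcal W$ of $Y$, and to lift each cylinder to $X$ at a cost of roughly $e^{(\beta+\tau/2)|F_n|}$ elements of $\mathcal U^{F_n}$. The fibrewise bound comes from hypothesis (2) via the Lindenstrauss covering lemma (Lemma~\ref{coveringlem}). The target is the following \emph{uniform lifting estimate}. There are a finite open cover $\mathcal W$ of $Y$ and $N_0\in\N$ such that for every $n\ge N_0$ and every $W\in\mathcal W^{F_n}$, the set $\pi^{-1}(W)$ is covered by at most $e^{(\beta+\tau/2)|F_n|}$ elements of $\mathcal U^{F_n}$. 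Given this, the conclusion is immediate, as follows. Since $h_{\mathrm{dim}}(\pi(E),\mathcal W,\{F_n\})\le h_{\mathrm{dim}}(\pi(E),\{F_n\})<\lambda$, for any $\varepsilon>0$ we cover $\pi(E)$ by $W_i\in\mathcal W^{F_{n_i}}$ with $|F_{n_i}|\ge\max\{1/\varepsilon,|F_{N_0}|\}$ (enlarging $N_0$ if needed, since $|F_n|\to\infty$) and $\sum_i e^{-\lambda|F_{n_i}|}\le\varepsilon$. Lifting each $W_i$ gives a countable cover of $E$ by elements of $\mathcal U^{F_{n_i}}$ with total weight $\sum_i e^{(\beta+\tau/2)|F_{n_i}|}e^{-(\lambda+\beta+\tau)|F_{n_i}|}\le\varepsilon$. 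Hence $F_{\lambda+\beta+\tau}(\mathcal U,E,\{F_n\})=0$.

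To construct $\mathcal W$, fix a small $\delta$, take $D=\{e_G\}$, and let $M=M(\delta,C,D)$ be given by Lemma~\ref{coveringlem}. Choose integers $K_1<K_2<\cdots<K_M$ inductively. At level $i$, apply hypothesis (2) with $K=K_i$ and pass by compactness to a finite subfamily $\{V(y_{i,j})\}_{j}$ covering $Y$, with scales $R_{i,j}\ge K_i$. Choose $K_{i+1}$ so large that every $F^{(1)}_R$ with $R\ge K_{i+1}$ is $(1+\delta)$-invariant under the finite set $\bigcup_{i'\le i,j}(F^{(1)}_{R_{i',j}})^{-1}$, in the sense of the second condition in (1). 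Within one level, order the sets $F^{(1)}_{R_{i,j}}$ by increasing index. Since $\{F^{(1)}_n\}$ is tempered with constant $C$, the first condition in (1) holds, because the union over earlier sets of the same level sits inside the union over all earlier members of the tempered sequence. Let $\mathcal W$ be a common refinement of the $M$ covers $\{V(y_{i,j})\}_j$.

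Now fix $W=\bigcap_{a\in F_n}a^{-1}W_a\in\mathcal W^{F_n}$, so that $aW\subset W_a\subset V(y_{i,j(i,a)})$ for every level $i$. Put $H=F_n$ and $H_{i,j}=F^{(1)}_{R_{i,j}}$, and let $A_{i,j}$ be the set of $a\in F_n$ with $j(i,a)=j$ and $F^{(1)}_{R_{i,j}}a\subset F_n$. By the F{\o}lner property, for $n$ large we get $|A_{i,*}|\ge(1-\delta)|F_n|$, so $\alpha\ge 1-\delta$. Lemma~\ref{coveringlem} then gives a $10\delta^{1/4}$-disjoint family of tiles $F^{(1)}_{R}a\subset F_n$ covering at least $(1-\delta-\delta^{1/4})|F_n|$ elements. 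On each tile, $\pi^{-1}(W)\subset a^{-1}\pi^{-1}(V(y))$ is covered by at most $e^{(\beta+\eta)|F^{(1)}_R|}$ elements of $\mathcal U^{F^{(1)}_R a}$. Intersecting these choices over the tiles, and with arbitrary elements of $\mathcal U$ at the uncovered points of $F_n$, covers $\pi^{-1}(W)$ by elements of $\mathcal U^{F_n}$. Their number is at most
\[
e^{(\beta+\eta)(1-10\delta^{1/4})^{-1}|F_n|}\cdot|\mathcal U|^{(\delta+\delta^{1/4})|F_n|},
\]
which is at most $e^{(\beta+\tau/2)|F_n|}$ once $\delta$ is small, using $\eta=\tau/6$ and $\beta\ge0$. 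Because the tiling depends only on $W$, no combinatorial factor enters.

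The main obstacle I expect is checking the hypotheses of Lemma~\ref{coveringlem} exactly as stated. This means setting up the inductive choice of the $K_i$ so that the cross-level invariance condition holds with the finite sets coming from compactness. It also means ensuring that $N_0$ can be taken uniform in $W$ and depending only on the finitely many scales $R_{i,j}$, so that boundary effects of the F{\o}lner sets $F_n$ are negligible. A further point to handle is passing from $\delta$-disjointness to a multiplicative bound: summing $|F^{(1)}_R|$ over the tiles is at most $(1-10\delta^{1/4})^{-1}|F_n|$.
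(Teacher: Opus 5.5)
Your proposal is correct and follows essentially the paper's route: $M$ compactness-extracted covers of $Y$ with cross-level F\o lner invariance, their common refinement $\mathcal W$, a dimensional cover of $\pi(E)$ by cylinders in $\mathcal W^{F_{n_i}}$, and a lift of each cylinder via Lemma~\ref{coveringlem} with $H_{i,j}=F^{(1)}_{R_{i,j}}$ and the $10\delta^{1/4}$-disjointness count. The differences are minor: you state the lifting bound uniformly before covering $\pi(E)$, and your direct use of $aW\subset W_a$ avoids the paper's standing assumption $e_G\in F_n$.

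Two bookkeeping points need tightening. First, merge repeated scales within a level so the $R_{i,j}$ are strictly increasing; the paper does this by separating point indices $q$ from scale indices $j$. Second, take $\varepsilon<(|F_1|+\cdots+|F_{N_0}|)^{-1}$, so that $|F_{n_i}|\ge 1/\varepsilon$ genuinely forces $n_i>N_0$.
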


	\begin{proof}
Let $\delta>0$ be small enough such that
		\begin{equation}\label{5.2}
			10\delta^{\frac{1}{4}}<\frac{1}{2},\quad 10\delta^{\frac{1}{4}}\beta<\frac{1}{6}\tau,\quad\text{and}\quad |\mathcal{U}|^{(\delta+\delta^{\frac{1}{4}})}<\mathrm{e}^{\frac{1}{3}\tau},
		\end{equation}
		and let $M$ be the integer in Lemma \ref{coveringlem} according to $\delta,\ C$ and $D=\{e_{G}\}$.

\smallskip
	
To apply the Lindenstrauss covering lemma (Lemma~\ref{coveringlem}), let us define the sets $H_{i,j}$.

		For $i=1$, note that by the assumption we have obtained a family $\{(V(y),R(y))\}_{y\in Y}$ as above. By the compactness of $Y$, there exists $\{y_{1,1},\cdots,y_{1,P_1}\}\subset Y$ such that
		$
		Y=\bigcup_{q=1}^{P_1}V(y_{1,q}).
		$
		For simplicity, we write
		$
		V_{1,q}:=V(y_{1,q})$ for $q=1,\cdots,P_1
		$
		and set
		$
		\mathcal{V}_1=\{V_{1,1},\cdots,V_{1,P_1}\}.
		$
		Collecting the corresponding finitely many integers and finitely many open subsets associated with the points $y_{1,1},\cdots,y_{1,P_1}$, we obtain the following:
		\begin{itemize}
			\item There exist finitely many integers $R_{1,1}<\cdots<R_{1,N_{1}}$.
			
			\item For each $q\in \{1,\cdots,P_{1}\}$, there exists $j=j(1,q)\in\{1,\cdots,N_{1}\}$ such that $\pi^{-1}(V_{1,q})$
is covered by at most $\mathrm{e}^{(\beta+\eta)|F_{R_{1,j}}^{(1)}|}$ elements in $\mathcal{U}^{F_{R_{1,j}}^{(1)}}$.
		\end{itemize}
		
	For $i=2,\cdots,M$, similarly, there exists a finite subset $\{y_{i,1},\cdots,y_{i,P_i}\}\subset Y$, and a finite open cover  $\mathcal{V}_{i}=\{V_{i,1},\cdots,V_{i,P_i}\}$ of $Y$ with $y_{i,q}\in V_{i,q}$ for each $q=1,\cdots,P_i$, furthermore:
		\begin{itemize}
			\item There exist finitely many integers $R_{i,1}<\cdots<R_{i,N_{i}}$.
			
			\item For each $q\in \{1,\cdots,P_{i}\}$, there exists $j=j(i,q)\in\{1,\cdots,N_{i}\}$ such that $\pi^{-1}(V_{i,q})$ is covered by at most $\mathrm{e}^{(\beta+\eta)|F_{R_{i,j}}^{(1)}|}$ elements in $\mathcal{U}^{F_{R_{i,j}}^{(1)}}$.
		\end{itemize}
		Note that, $R(y)\in\mathbb N$ in the second item of the assumption can be chosen sufficiently large for each $y\in Y$ and that $\{F_n^{(1)}\}_{n\in\mathbb N}$ is a F\o lner sequence, we may assume additionally that
		\begin{equation}\label{construction 2}
			R_{i-1,N_{i-1}}<R_{i,1}
			\quad\text{and}\quad\left|\bigcup_{i'<i}\bigcup_{j=1}^{N_{i'}}\{e_{G}\}{F_{R_{i',j}}^{(1)}}^{-1}F_{R_{i,k}}^{(1)}\right|\leq (1+\delta)|F_{R_{i,k}}^{(1)}|
		\end{equation}
		for each $i=2,\cdots,M$ and every $k= 1,\cdots,N_{i}$.
		
		The required $H_{i,j}$ is defined as $F_{R_{i,j}}^{(1)}$ for each $i=1,\cdots,M$ and every $j= 1,\cdots,N_{i}$.

\smallskip		
		
		We now turn back to the original F\o lner sequence $\{F_n\}_{n\in\mathbb N}$. By the definition of F\o lner sequences, there exists $N\in \mathbb{N}$ (depending on the previously constructed $H_{i,j}$ and $\delta$) such that for all $n\geq N$, one has $|F_{n}^{*}|\geq (1-\delta)|F_{n}|$, where
		\[
		F_{n}^{*}=\{f\in F_{n}:H_{i,j}f\subset F_{n}\,\,\text{for each}\,\, i=1,\cdots,M\,\,\text{and every}\,\, j= 1,\cdots,N_{i}\}.
		\]
		
		Let
		$
		\mathcal{V}=\bigvee_{i=1}^{M}\mathcal{V}_{i}.$ Recall that we have assumed that $h_{\mathrm{dim}}(\pi(E),\{F_{n}\}_{n\in\mathbb{N}})<\lambda$, and hence
		$
		h_{\mathrm{dim}}(\pi(E),\mathcal{V},\{F_{n}\}_{n\in\mathbb{N}})<\lambda.
		$
		In particular, for every $\varepsilon\in(0,{(|F_{1}|+\cdots+|F_{N}|)}^{-1})$, we have \(F_{\lambda,\varepsilon}(\mathcal{V},\pi(E),\{F_{n}\}_{n\in\mathbb{N}})=0.\)
		Thus there exists a countable cover $\{E_{k}\}_{k\in I}$ of $\pi(E)$ such that
		\(\sum_{k\in I}\mathrm{e}^{-\lambda|F_{m_{k}}|}\leq \varepsilon\) and each $E_k$ is an element of $\mathcal{V}^{F_{m_k}}$ for some $|F_{m_k}|\geq \frac{1}{\varepsilon}$.

\smallskip

Fix $E_{k}$ for some $k\in I$. Now let us estimate $N(\mathcal{U}^{F_{m_{k}}},\pi^{-1}E_k)$.

		For each $i\in\{1,\cdots,M\}$ and every $ g \in F_{m_{k}}^{*}$, we have
		\begin{equation}\label{5.3}
			gE_{k}\in g\mathcal{V}^{F_{m_{k}}}\succeq g\mathcal{V}^{F_{R_{1,1}}^{(1)}g}=\mathcal{V}^{F_{R_{1,1}}^{(1)}}\succeq \mathcal{V}\succeq \mathcal{V}_{i},
		\end{equation}
		where the first ``$\succeq$'' holds due to $F_{R_{1,1}}^{(1)}g=H_{1,1}g\subset F_{m_{k}}$, and the second ``$\succ$'' comes from $e_{G}\in F_{n}^{(1)}\  \text{for all }n\in \mathbb{N}$ (we have made such assumption at the beginning of the section). We deduce from (\ref{5.3}) that,
		$g\pi^{-1}(E_{k}) $ is contained in some element of $\pi^{-1}\mathcal{V}_{i}$,
		and hence
		$$
		g\pi^{-1}(E_{k})\text{ can be covered by at most } \mathrm{e}^{(\beta+\eta)|F_{R_{i,i(g)}}^{(1)}|} \text{ elements in } \mathcal{U}^{F_{R_{i,i(g)}}^{(1)}}
		$$
		for some $i(g)\in \{1,\cdots,N_{i}\}$ by the construction of $\mathcal{V}_{i}$.
		Now for each $j=1,\cdots,N_{i}$ we set
		\[
		A_{i,j}(E_{k})=\left\{g\in F_{m_{k}}^{*}: N \left( \mathcal{U}^{F_{R_{i,j}}^{(1)}}, g\pi^{-1}(E_{k})\right)\le \mathrm{e}^{(\beta+\eta)|F_{R_{i,j}}^{(1)}|}\right\}.
		\]
		Then by the above discussion,
		\[
		F_{m_{k}}^{*}=\bigcup_{j=1}^{N_{i}}A_{i,j}(E_{k}) \ \ \text{and}\ \  H_{i,j}A_{i,j}(E_{k})\subset F_{m_{k}}.
		\]
		Consider the collection $\{H_{i,j}a:a\in A_{i,j}(E_{k});i=1,\cdots,M;\, j=1,\cdots,N_{i}\}$. Since $|F_{m_{k}}|\geq \frac{1}{\varepsilon}$ and hence $m_{k}\geq N$, the assumptions in Lemma \ref{coveringlem} are satisfied with $\alpha=1-\delta$. Then the collection admits a $10\delta^\frac{1}{4}$-disjoint subfamily $\mathcal{F}(E_k)$ with $|\cup \mathcal{F}(E_k)|\geq (1-\delta-\delta^{\frac{1}{4}})|F_{m_{k}}|$.

		Note that, for each $F\in \mathcal{F}(E_k)$, it can be written in the form
		\(
		F=H_{i,j}a \,(=F_{R_{i,j}}^{(1)}a)
		\)
		for some $i=i_F\in \{1,\cdots,M\}$, $j=j_F\in \{1,\cdots,N_i\}$, and $a=a_{F}\in A_{i,j}(E_k)$. Thus
\[
N \left(\mathcal{U}^{F_{R_{i_F,j_F}}^{(1)}}, a_{F}\pi^{-1}(E_{k})\right)\le \mathrm{e}^{(\beta+\eta)|F_{R_{i_F,j_F}}^{(1)}|}\quad \text{equivalently}
\quad N (\mathcal{U}^{F}, \pi^{-1}(E_{k}))\le \mathrm{e}^{(\beta+\eta)|F|}.
\]

It follows directly that
		\[
		\begin{aligned}
N (\mathcal{U}^{\cup\mathcal{F} (E_k)}, \pi^{-1}(E_k)) & \le \prod_{F\in \mathcal{F}(E_k)} N (\mathcal{U}^{F}, \pi^{-1}(E_{k})) \le \exp\left((\beta+\eta)\sum_{F\in \mathcal{F}(E_k)}\left|F \right|\right)
			\\&\leq \exp\left((\beta+\eta)\frac{|\cup\mathcal{F} (E_k)|}{1-10\delta^{\frac{1}{4}}}\right)\ (\text{as }\mathcal{F} (E_k)\text{ is }10\delta^{\frac{1}{4}}\text{-disjoint})
			\\&\leq \exp\left((\beta+\eta)\frac{|F_{m_{k}}|}{1-10\delta^{\frac{1}{4}}}\right),
		\end{aligned}
		\]
and then
\begin{equation}\label{5.4}
			\begin{aligned}
				N(\mathcal{U}^{F_{m_{k}}},\pi^{-1}(E_{k}))
				&\leq|\mathcal{U}|^{|F_{m_{k}}\setminus \cup\mathcal{F} (E_k)|}\cdot N(\mathcal{U}^{\cup\mathcal{F} (E_k)},\pi^{-1}(E_{k}))
				\\&\leq \Big( |\mathcal{U}|^{(\delta+\delta^{\frac{1}{4}})}\cdot \exp \Big((\beta+\eta)\frac{1}{1-10\delta^{\frac{1}{4}}} \Big)   \Big)^{|F_{m_{k}}|}.
			\end{aligned}
		\end{equation}

Recall that the countable family $\{E_{k}\}_{k\in I}$ covers $\pi(E)$ satisfying
		\[
		\sum_{k\in I}\mathrm{e}^{-\lambda|F_{m_{k}}|}\leq \varepsilon\quad \text{and}\quad E_{k}\in  \mathcal{V}^{F_{m_{k}}}\ \text{and}\ |F_{m_{k}}|\geq \frac{1}{\varepsilon}\ \text{for each}\ k\in I.
		\]
		Then, $\{\pi^{- 1} (E_{k})\}_{k\in I}$ is a countable family covering $E$, and so by noting $\eta = \frac{\tau}{6}$ one has
	\[
		\begin{aligned}
			F_{\lambda+\beta+\tau,\varepsilon}(\mathcal{U},E,\{F_{n}\}_{n\in\mathbb{N}})&\leq \sum_{k\in I}N(\mathcal{U}^{F_{m_{k}}},\pi^{-1}(E_{k}))\cdot \mathrm{e}^{-(\lambda+\beta+\tau)|F_{m_{k}}|}
			\\&\leq \sum_{k\in I}\Big( |\mathcal{U}|^{(\delta+\delta^{\frac{1}{4}})}\cdot \exp \Big((\beta+\eta)\frac{1}{1-10\delta^{\frac{1}{4}}} \Big)\cdot\mathrm{e}^{-(\lambda+\beta+\tau)}\Big)^{|F_{m_{k}}|}
			\\&\leq \sum_{k\in I}\mathrm{e}^{-\lambda|F_{m_{k}}|}\leq\varepsilon,
		\end{aligned}
		\]
		where the second and third inequalities follows from (\ref{5.4}) and (\ref{5.2}), respectively. Thus it follows from the definition that
		\(
		F_{\lambda+\beta+\tau}(\mathcal{U},E,\{F_{n}\}_{n\in\mathbb{N}})=0.
		\)
		This ends the proof.
	\end{proof}

	We can now proceed to the proof of Theorem \ref{thm4}.
	
	\begin{proof}[Proof of Theorem \ref{thm4}]
It makes no difference to assume that $$h_{\mathrm{dim}}(\pi(E),\{F_{n}\}_{n\in \mathbb{N}})<\infty \ \ \text{and}\ \ \sup_{y\in Y}h_{\mathrm{top}}(\pi^{-1}(y),\{F_{n}\}_{n\in \mathbb{N}})\ (\text{denoted by}\ \beta)<\infty$$ otherwise there is nothing to prove. Fix arbitrarily given $\mathcal{U}\in\mathfrak{C}_{X}^{o}$ and $\lambda, \tau\in \mathbb{R}$ satisfying $\lambda>h_{\mathrm{dim}}(\pi(E),\{F_{n}\}_{n\in \mathbb{N}})$ and $\tau>0$.
		It suffices to show that
		\begin{equation}\label{key goal of thm}
			F_{\lambda+\beta+\tau}(\mathcal{U},E,\{F_{n}\}_{n\in\mathbb{N}})=\lim_{\varepsilon\to 0}F_{\lambda+\beta+\tau,\varepsilon}(\mathcal{U},E,\{F_{n}\}_{n\in\mathbb{N}})=0.
		\end{equation}

		Recall that no temperedness assumption is imposed on $\{F_n\}_{n\in\mathbb N}$ in Theorem~\ref{thm4}. Nevertheless, by taking a subsequence we may take a tempered F\o lner subsequence $\{F_n^{(1)}\}_{n\in\mathbb N}$.

Fix arbitrarily $K\in \N$ and set \(\eta = \frac{\tau}{6}\). Note that for each $y \in Y$ we have
		\[
		\limsup_{n\to \infty}\frac{1}{|F_{n}^{(1)}|}\log N(\mathcal{U}^{F_{n}^{(1)}},\pi^{-1}(y))\leq \limsup_{n\to \infty}\frac{1}{|F_{n}|}\log N(\mathcal{U}^{F_{n}},\pi^{-1}(y))\leq \beta.
		\]
Then there exists $R(y)\in\mathbb N$ with $R (y)\ge K$, such that
		\[
			N(\mathcal{U}^{F_{R(y)}^{(1)}},\pi^{-1}(y))\leq \mathrm{e}^{(\beta+\eta)|F_{R(y)}^{(1)}|},
		\]
and so $\pi^{-1}(y)$ is covered by an open subset $U(y)$ with
\[N(\mathcal{U}^{F_{R(y)}^{(1)}}, U(y))\leq \mathrm{e}^{(\beta+\eta)|F_{R(y)}^{(1)}|}.\]
 Thus there exists an open neighborhood of $y$, say $V(y)$, with $\pi^{-1}(V(y))\subset U(y)$, and then $\pi^{-1}(V(y))$ can be covered by at most
$e^{(\beta+\eta)|F^{(1)}_{R(y)}|}$ elements of $\mathcal{U}^{F^{(1)}_{R(y)}}$.
	That is, the assumption of Proposition \ref{section-3} is satisfied, and then the required identity \eqref{key goal of thm} comes directly.
	\end{proof}
	
	\section{Proof of Theorem \ref{thm3}}\label{relative}
	
	In this section, we prove Theorem \ref{thm3}, which characterizes relative topological entropy via Bowen's dimensional entropy along the fibres.
The proof of Theorem \ref{thm3} follows a similar strategy to that of Theorem \ref{thm4} but with a key difference: instead of combining a dimensional entropy bound on the image with fibre topological entropy bounds, we now need to work directly with the dimensional entropy of the fibres.

\smallskip

The proof of Theorem \ref{thm3} relies again on a key covering proposition, Proposition \ref{prop15}, whose statement and proof are inspired by \cite[Proposition 2.6]{Dou-Wang-Zhang2025} and Propositions~\ref{prop14} and \ref{section-3}.
We provide here a proof of Proposition \ref{prop15} for completeness.

	\begin{prop}\label{prop15}
	Let $\{F_{n}\}_{n\in\mathbb{N}}$ be a tempered F\o lner sequence
		with constant $C$, satisfying the growth condition \eqref{1.1} and $e_G\in F_n$ for every $n\in\mathbb N$. Let
		\(\mathcal{U} \in \mathfrak{C}_X^o\). If $\lambda\in \mathbb{R}$ satisfies
		\[
		\sup_{y\in Y} h_{\mathrm{dim}}(\pi^{-1}(y),\mathcal U,\{F_n\}_{n\in\mathbb N})<\lambda,
		\]
		then, for every sufficiently small \(\delta>0\), there exists \(P\in\mathbb N\) such that, for all \(n>P\),
		\[
		\sup_{y\in Y} N(\mathcal U^{F_n},\pi^{-1}(y))
		\le
		|\mathcal U|^{(\delta+\delta^{\frac{1}{4}})|F_n|}\cdot
		\mathrm e^{\lambda|F_n|}\cdot
		2^{\delta|F_n|}.
		\]
	\end{prop}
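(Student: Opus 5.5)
The plan is to follow the strategy of Proposition~\ref{prop14}, i.e.\ of \cite[Proposition 2.6]{Dou-Wang-Zhang2025}. First I convert the hypothesis on fibrewise dimensional entropy into finitely many covers that are uniform in $y$. Then I use Lemma~\ref{coveringlem} to tile most of $F_n$, and Lemma~\ref{combilem} to count the possible tilings. Fix $\lambda'$ with $\sup_y h_{\mathrm{dim}}(\pi^{-1}(y),\mathcal U,\{F_n\})<\lambda'<\lambda$; the gap $\lambda-\lambda'$ will absorb the loss caused by $10\delta^{1/4}$-disjointness. Take $\delta$ small, with $10\delta^{1/4}<\tfrac12$ and $\lambda'/(1-10\delta^{1/4})\le\lambda$. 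Let $M$ be given by Lemma~\ref{coveringlem} for $\delta$, $C$ and $D=\{e_G\}$, and let $N(\delta)$ be given by Lemma~\ref{combilem} with $\eta=\delta$.

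\emph{Step 1: uniform local covers.} Since $F_{\lambda'}(\mathcal U,\pi^{-1}(y),\{F_n\})=0$, for every $y$ and every threshold $K$ there is a countable cover of $\pi^{-1}(y)$ by elements $B\in\mathcal U^{F_{n_B}}$ with $n_B\ge K$ and $\sum_B e^{-\lambda'|F_{n_B}|}<1$. These elements are open and the fibre is compact, so a finite subfamily $\mathcal B(y)$ suffices. Its union is an open set containing $\pi^{-1}(y)$, so there is an open $V(y)\ni y$ with $\pi^{-1}(V(y))\subset\bigcup\mathcal B(y)$. By compactness of $Y$ I obtain finitely many $V_{1,q}$ covering $Y$, with covers $\mathcal B_{1,q}$ whose scales form a finite set $R_{1,1}<\dots<R_{1,N_1}$. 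I repeat this for $i=2,\dots,M$, choosing each threshold $K$ so large that $R_{i,1}>R_{i-1,N_{i-1}}$ and the second condition of Lemma~\ref{coveringlem}(1) holds. This is possible because $\{F_n\}$ is F\o lner, exactly as in \eqref{construction 2}. The first condition of Lemma~\ref{coveringlem}(1) is temperedness. I also require all scales to be at least $N(\delta)$. Set $H_{i,j}=F_{R_{i,j}}$.

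\emph{Step 2: tiling for a single point.} For $n$ large, the set $F_n^*=\{g\in F_n: H_{i,j}g\subset F_n\ \forall i,j\}$ satisfies $|F_n^*|\ge(1-\delta)|F_n|$. Fix $y\in Y$. For each $g\in F_n^*$ and each $i$, I choose once and for all (depending only on $y$) an index $q=q_i(g)$ with $gy\in V_{i,q}$; this determines the finite family $\mathcal B_{i,q_i(g)}$. Now let $x\in\pi^{-1}(y)$. Then $gx$ lies in some $B\in\mathcal B_{i,q_i(g)}$ of some scale $R_{i,j}$, and I put $g$ into $A_{i,j}(x)$. Lemma~\ref{coveringlem} then applies with $\alpha=1-\delta$. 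It yields a $10\delta^{1/4}$-disjoint family $\mathcal F(x)$ of translates $F_{R_{i,j}}a$, each labelled by its chosen $B$, and covering at least $(1-\delta-\delta^{1/4})|F_n|$. The point $x$ lies in $\bigcap_{F\in\mathcal F(x)} a_F^{-1}B_F$, an element of $\mathcal U^{\cup\mathcal F(x)}$ up to refinement, and the rest of $F_n$ costs a factor $|\mathcal U|^{(\delta+\delta^{1/4})|F_n|}$.

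\emph{Step 3: counting, which is the main obstacle.} The family $\mathcal F(x)$ is $\tfrac12$-disjoint, so by Lemma~\ref{combilem} the number of possible shapes is at most $2^{\delta|F_n|}$ for large $n$. Here I need the finitely many scales to be fixed before $n$, which they are, and this is what makes $P$ independent of $y$. For a fixed shape, each tile $F=F_{R}a$ ranges over the labels in the fixed family $\mathcal B_{i,q_i(a)}$. Since $\sum_{B}e^{-\lambda'|F_{n_B}|}<1$ for each family, summing $\prod_F e^{-\lambda'|F|}$ over all labellings gives at most $1$. Each term is at least $e^{-\lambda'\sum_F|F|}\ge e^{-\lambda'|F_n|/(1-10\delta^{1/4})}\ge e^{-\lambda|F_n|}$, so the number of labellings is at most $e^{\lambda|F_n|}$. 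Multiplying the three factors gives the bound, uniformly in $y$. The delicate point is to make sure that the label families depend only on $(y,g,i)$ and not on $x$, which is exactly why I fix the choice $q_i(g)$ from $y$ beforehand. A second check is that the subfamily selection in Lemma~\ref{coveringlem} is compatible with counting by shapes and labels.
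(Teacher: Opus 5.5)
Your route is the paper's: finite uniform covers from compactness of the fibres and of $Y$, the index $q_i(g)$ fixed from $y$ alone (the paper's $i(g)$), Lemma~\ref{coveringlem} with $\alpha=1-\delta$, Lemma~\ref{combilem} to count the tilings, and a per-tile count of at most $\mathrm e^{\lambda'|F|}$. Your Kraft-type sum over labellings is equivalent to the paper's product of the numbers $N(\mathcal U^F,W_{\mathcal F})$.

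However, the ``second check'' you postpone is a genuine gap, not a formality. Lemma~\ref{combilem} counts sub-collections of $\{F_{n_i}a: F_{n_i}a\subset F_n\}$, that is, families of subsets of $G$. A tile $F$ in such a family does not remember through which triple $(i,j,a)$, with $F=F_{R_{i,j}}a$, a given point $x$ produced it. Two points $x,x'$ with the same shape may reach the same set $F$ via $a\neq a'$, or via different levels, since a F\o lner sequence may repeat sets. Their admissible labels then come from different families $\mathcal B_{i,q_i(a)}$ and $\mathcal B_{i',q_{i'}(a')}$, because $q_i(a)$ depends on $ay$. So the phrase ``each tile $F=F_Ra$ ranges over the labels in the fixed family $\mathcal B_{i,q_i(a)}$'' is not well defined. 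The sum of $\mathrm e^{-\lambda'|F|}$ over all admissible labels of $F$ is bounded only by the number of representations of $F$, not by $1$.

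The paper closes this with the set $\mathcal E_F$, in three steps:
\begin{enumerate}
\item It makes the level $i_F$ determined by $|F|$, via the strict cardinality separation in \eqref{construction}. Your separation $R_{i,1}>R_{i-1,N_{i-1}}$ of indices does not give this.
\item It makes $F_{R_{i,1}},\dots,F_{R_{i,N_i}}$ pairwise distinct, so that $j$ is determined by $a$.
\item It uses $e_G\in F_R$ to get $a\in F$, so $F$ has at most $|F|$ representations.
\end{enumerate}
The resulting factor $\prod_{F\in\mathcal F}|F|$ must then be absorbed, and your choice $\lambda'/(1-10\delta^{1/4})\le\lambda$ leaves no room for it. You need $\lambda'<(1-10\delta^{1/4})\lambda$ strictly. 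You also need all scales $R_{i,j}\ge N'$, where $|F_m|\le\mathrm e^{((1-10\delta^{1/4})\lambda-\lambda')|F_m|}$ for $m\ge N'$, exactly as in \eqref{expo}.

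Alternatively, the counting argument behind Lemma~\ref{combilem} (grouping tiles by size and using \eqref{1.1}) in fact bounds the number of $\tfrac12$-disjoint families of \emph{indexed} pairs $(R_{i,j},a)$. If you prove and use that version, a shape determines $(i,j,a)$, because your $R_{i,j}$ are distinct integers, and your count goes through as written. With either repair the rest of your argument is correct.
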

	\begin{proof}
		By choosing $\delta\in (0,20^{-4})$ small enough, we pick $\lambda^{*}\in \mathbb{R}$ such that
		\begin{equation} \label{estimate-fibredim}
			\lambda_\delta:=(1-10\delta^{1/4})\lambda
			>
			\lambda^*
			>
			\sup_{y\in Y}
			h_{\mathrm{dim}}(\pi^{-1}(y),\mathcal U,\{F_n\}_{n\in\mathbb N}).
		\end{equation}

		We take $M\in \mathbb{N}$ large enough (depending on $\delta$, $C$ and $D=\{e_{G}\}$) by Lemma \ref{coveringlem}, we also take $N\in \mathbb{N}$ by Lemma \ref{combilem} according to $\delta$.
		Since \(G\) is infinite and \(\{F_n\}_{n\in\mathbb N}\) is a F\o lner sequence,
		we have \(|F_n|\to\infty\). Hence, there exists $N'\in\mathbb{N}$ such that for every \(n\ge N'\),
		\begin{equation}\label{expo}
			|F_n|\leq \mathrm{e}^{(\lambda_{\delta}-\lambda^{*})|F_n|}.
		\end{equation}
		We shall denote from now on $\widetilde{N}=\max\{N,N'\}$ for simplicity.

		By the construction (\ref{estimate-fibredim}), for $\varepsilon_1=(|F_{1}|+\cdots+|F_{\widetilde N}|)^{-1}>0$ and each $y\in Y$, one has
		\begin{equation}\label{4.2}
			0=F_{\lambda^{*}}(\mathcal{U},\pi^{-1}(y),\{F_{n}\}_{n\in\mathbb{N}})\geq F_{\lambda^{*},\varepsilon_1}(\mathcal{U},\pi^{-1}(y),\{F_{n}\}_{n\in\mathbb{N}}),
		\end{equation}
		and then, by the compactness of $\pi^{-1}(y)$, there exist
		\begin{itemize}
			\item finitely many integers
			$
			(\widetilde N\leq)\ R_{y,1}<\cdots<R_{y,N_{y}},
			$ and
			\item finitely many open subsets
			$
			\{B_{y,1,r}\}_{r=1}^{k(y,1)}\subset \mathcal{U}^{F_{R_{y,1}}},\cdots,\{B_{y,N_{y},r}\}_{r=1}^{k(y,N_{y})}\subset \mathcal{U}^{F_{R_{y,N_{y}}}}
			$,
		\end{itemize}
		such that
		\[
		\pi^{-1}(y)\subset U(y):=\bigcup_{j=1}^{N_{y}}\bigcup_{r=1}^{k_(y,j)}B_{y,j,r} \ \ \text{and}\ \ \sum_{j=1}^{N_{y}}k(y,j)\mathrm{e}^{-\lambda^{*}|F_{R_{y,j}}|}<1.
		\]
Thus there exists an open set $V (y)$ containing $y$ with $\pi^{-1}(V(y))\subset U(y)$.
		After merging possible repetitions among the sets \(F_{R_{y,j}}\), we assume
$F_{R_{y,1}},\cdots,F_{R_{y,N_y}}$
		are pairwise distinct.

\smallskip

		To apply Lindenstrauss covering lemma (Lemma \ref{coveringlem}), let us construct $H_{i,j}$ as follows.

		For $i=1$, note that we have obtained an open cover $\{V(y):y\in Y\}$ as above. By the compactness of $Y$, there exist $\{y_{1,1},\cdots,y_{1,P_{1}}\}$ such that $Y\subset \bigcup_{q=1}^{P_{1}}V(y_{1,q})$. We shall rename $V(y_{1,q})$ to $V_{1,q}$ for each $q=1,\cdots,P_1$ for simplicity. By rewriting those finitely many integers and finitely many open subsets for all points $y_{1,1},\cdots,y_{1,P_1}$ one has that:
		
		\begin{itemize}
			\item There exist finitely many integers $(\widetilde N\leq)\ R_{1,1}<\cdots<R_{1,N_{1}}$ with $F_{R_{1,1}},\cdots,F_{R_{1,N_1}}$ pairwise distinct.
			
			\item For each $q\in\{1,\cdots,P_{1}\}$, there are finitely many open subsets of $X$
			$$
			\{B_{1,q,r}^{(1)}\}_{r=1}^{k_{1}(1,q)}\subset \mathcal{U}^{F_{R_{1,1}}},\cdots,\{B_{N_{1},q,r}^{(1)}\}_{r=1}^{k_{1}(N_{1},q)}\subset \mathcal{U}^{F_{R_{1,N_{1}}}},
			$$
			some of $k_{1}(j,q)$ for $j=1,\cdots,N_1$ may be zero, such that
			$$
			\pi^{-1}(y_{1,q})\subset\pi^{-1}(V_{1,q})\subset U_{1,q}:=\bigcup_{j=1}^{N_{1}}\bigcup_{r=1}^{k_{1}(j,q)}B_{j,q,r}^{(1)}\ \ \text{and}\ \  \,\sum_{j=1}^{N_{1}}k_{1}(j,q)\mathrm{e}^{-\lambda^{*}|F_{R_{1,j}}|}<1.
			$$
		\end{itemize}

		For $i=2,\cdots,M$, as was done in the previous paragraphs (the only difference being that we shall consider $F_{\lambda^{*},\varepsilon_i}$ for $\varepsilon_i$ instead of $\varepsilon_1$, where $\varepsilon_i$ will be specified later), there exists a finite subset $\{y_{i,1},\cdots,y_{i,P_i}\}\subset Y$, and a finite open cover $\{ V_{i,1},\cdots,V_{i,P_{i}}\}$ of $Y$ with $y_{i,q}\in V_{i,q}$ for each $q=1,\cdots,P_i$, furthermore:
		\begin{itemize}
			\item There exist finitely many integers
			$
			R_{i,1}<\cdots<R_{i,N_{i}},
			$ with each $|F_{R_{i,j}}|\geq \frac{1}{\varepsilon_i}$, and the sets $F_{R_{i,1}},\cdots,F_{R_{i,N_i}}$ are pairwise distinct.
			
			\item For each $q\in\{1,\cdots ,P_{i}\}$, there exist finitely many open subsets of $X$
			$$
			\{B_{1,q,r}^{(i)}\}_{r=1}^{k_{i}(1,q)}\subset \mathcal{U}^{F_{R_{i,1}}},\cdots,\{B_{N_{i},q,r}^{(i)}\}_{r=1}^{k_{i}(N_{i},q)}\subset \mathcal{U}^{F_{R_{i,N_{i}}}},
			$$
			some of $k_{i}(j,q)$ for $j=1,\cdots,N_i$ may be zero, such that
			$$
			\pi^{-1}(y_{i,q})\subset \pi^{-1}(V_{i,q})\subset U_{i,q}:=\bigcup_{j=1}^{N_{i}}\bigcup_{r=1}^{k_{i}(j,q)}B_{j,q,r}^{(i)} \ \ \text{and}\ \ \,\sum_{j=1}^{N_{i}}k_{i}(j,q)\mathrm{e}^{-\lambda^{*}|F_{R_{i,j}}|}<1.
			$$
		\end{itemize}
		Note that $G$ is assumed to be a countably infinite discrete amenable group and $\{F_n\}_{n\in\mathbb{N}}$ is a F\o lner sequence. By choosing $\varepsilon_i>0$ small enough, we may assume additionally that
		\begin{equation}\label{construction}
			\max_{p= 1}^{N_{i-1}} |F_{R_{i-1,p}}| < \min_{q= 1}^{N_i} |F_{R_{i,q}}|\ \ \text{and}\ \ \left|\bigcup_{i'<i}\bigcup_{j=1}^{N_{i'}}\{e_{G}\}F_{R_{i',j}}^{-1}F_{R_{i,k}}\right|\leq (1+\delta)|F_{R_{i,k}}|
		\end{equation}
		for each $i=2,\cdots,M$ and every $k= 1,\cdots,N_{i}$.

		The required $H_{i,j}$ is defined as $F_{R_{i,j}}$ for each  $i=1,\cdots,M$ and $j= 1,\cdots,N_{i}$.
		
	\smallskip
		
		Since $\{F_{n}\}_{n\in\mathbb{N}}$ is a F\o lner sequence, there exists $Q_{1}\in \mathbb{N}$ (depending on the previously constructed $H_{i,j}$ and $\delta$) such that for all $n\geq Q_{1}$, we have $|F_{n}^{*}|\geq (1-\delta)|F_{n}|$, where
		\[
		F_{n}^{*}=\{f\in F_{n}:H_{i,j}f\subset F_{n}\,\,\text{for each}\,\, i=1,\cdots,M\,\,\text{and every}\,\, j= 1,\cdots,N_{i}\}.
		\]
		
		Fix any sufficiently large $n>Q_{1}$ and any $y\in Y$; we now estimate $N(\mathcal{U}^{F_{n}},\pi^{-1}(y))$.

		For each $i\in\{1,\cdots,M\}$ and every $g \in F_{n}^{*}$, notice that $\{V_{i,j}\}_{j=1}^{P_{i}}$ is an open cover of $Y$, then $gy\in V_{i,i(g)}$ for some $i(g)\in \{1,\cdots,P_{i}\}$, and thus
		\[
		\pi^{-1}(gy)\subset \pi^{-1}(V_{i,i(g)})\subset U_{i,i(g)}= \bigcup_{j=1}^{N_{i}}\bigcup_{r=1}^{k_{i}(j,i(g))}B_{j,i(g),r}^{(i)}.
		\]

		Pick arbitrarily $z\in \pi^{-1}(y)$ (so
		$gz\in g\pi^{-1}(y)=\pi^{-1}(gy)$).
		For each $j=1,\cdots,N_i$ we set
		\[
		A_{i,j}(z)=\left\{g\in F_{n}^{*}:gz\in \bigcup_{r=1}^{k_{i}(j,i(g))}B_{j,i(g),r}^{(i)}\right\}.
		\]
		It should be noted that  $i(g)$ depends only on $i,g$ and $y$, and is independent of $z$. Then
		\[
		F_{n}^{*}=\bigcup_{j=1}^{N_{i}}A_{i,j}(z) \,\,\text{ and }\,\, H_{i,j}A_{i,j}(z)\subset F_{n}.
		\]
		Consider the collection $\{H_{i,j}a:a\in A_{i,j}(z);i=1,\cdots,M;\, j=1,\cdots,N_{i}\}$. It admits a $10\delta^\frac{1}{4}$-disjoint sub-collection $\mathcal{F}_{z}$ satisfying $|\cup \mathcal{F}_{z}|\geq (1-\delta-\delta^{\frac{1}{4}})|F_{n}|$ by Lemma \ref{coveringlem}.

		Let $\mathfrak{F}=\{\mathcal{F}_{z}:z\in \pi^{-1}(y)\}$. Note that each family $\mathcal{F}_{z}$ is a $10\delta^{\frac{1}{4}}$-disjoint, and hence $\frac{1}{2}$-disjoint because $\delta\in (0,20^{-4})$, sub-collection of $\{F_{R_{i,j}}a:F_{R_{i,j}}a\subset F_{n};\,i=1,\cdots,M;\, j= 1,\cdots,N_{i}\}$ (recall $H_{i,j}=F_{R_{i,j}}$), and that $N\in\mathbb{N}$ is constructed by Lemma \ref{combilem} according to $\delta$. Using Lemma \ref{combilem} one obtains that, there exists $Q_{2}\in \mathbb{N}$ (depending on previously constructed $R_{i,j}$) such that $|\mathfrak{F}|\leq 2^{\delta|F_{n}|}$ for $n\geq Q_{2}$.

		Fix arbitrarily given $\mathcal{F}\in \mathfrak{F}$ and define $W_{\mathcal{F}}=\{z\in \pi^{-1}(y):\mathcal{F}_{z}=\mathcal{F}\}$. We shall prove
		\begin{equation}\label{estimate-number of disjoint}
			N(\mathcal{U}^{F_n},W_{\mathcal{F}})\leq \mathrm{e}^{\lambda|F_n|}\cdot|\mathcal{U}|^{(\delta+\delta^{\frac{1}{4}})|F_n|}.
		\end{equation}
Once this is proved, combined with the bound on \(|\mathfrak{F}|\), one has that
		\[
		N(\mathcal{U}^{F_n}, \pi^{-1}(y)) \le \sum_{\mathcal{F} \in \mathfrak{F}} N(\mathcal{U}^{F_n}, W_\mathcal{F}) \le |\mathcal{U}|^{(\delta+\delta^{\frac{1}{4}})|F_n|}\cdot e^{\lambda |F_n|}\cdot 2^{\delta |F_n|}
		\]
		whenever $n>P=\max\{Q_{1},Q_{2}\}$, completing the proof of the conclusion.

\smallskip

Now let us prove \eqref{estimate-number of disjoint}. Take arbitrarily \(F\in \mathcal{F}\). Then \(F\) is exactly
		\(F_{R_{i,j}}a\) for some \(i\in \{1,\cdots,M\}\),
		\(j\in\{1,\cdots,N_i\}\), and \(a\in F_n^*\). In fact, such \(i\)
		exists uniquely by \eqref{construction}; denote it by \(i_F\).
		Set
		$
		\mathcal{E}_F
		:=
		\{
		a\in F_n^*:
		\text{ there exists }j\in\{1,\cdots,N_{i_F}\}
		\text{ such that }F=F_{R_{i_F,j}}a
		\}.
		$
		Recall that for each \(i\), the sets
		\(F_{R_{i,1}},\cdots,F_{R_{i,N_i}}\) are pairwise distinct. Thus for every
		\(a\in \mathcal{E}_F\), there exists a unique index \(j_F(a)\in\{1,\cdots,N_{i_F}\}\)
		such that
		$
		F=F_{R_{i_F,j_F(a)}}a.
		$
		We now claim
		\begin{equation}\label{estimate WF}
			W_{\mathcal F}
			\subset
			\bigcup_{a\in \mathcal{E}_F}
			\bigcup_{r=1}^{k_{i_F}(j_F(a),i_F(a))}
			a^{-1}B_{j_F(a),i_F(a),r}^{(i_F)} .
		\end{equation}
		Indeed, take arbitrarily \(z\in W_{\mathcal F}\). Since \(\mathcal F_z=\mathcal F\)
		and \(F\in\mathcal F\), by the construction of \(\mathcal F_z\) there
		exist \(j\in\{1,\cdots,N_{i_F}\}\) and \(a\in A_{i_F,j}(z)\) such that
		$
		F=F_{R_{i_F,j}}a,
		$
		and hence \(a\in \mathcal{E}_F\) and \(j=j_F(a)\). Then (\ref{estimate WF}) follows from the definition of
		\(A_{i_F,j_F(a)}(z)\), as
		\[
		az\in
		\bigcup_{r=1}^{k_{i_F}(j_F(a),i_F(a))}
		B_{j_F(a),i_F(a),r}^{(i_F)},\quad\text{equivalently,}\quad z\in
		\bigcup_{r=1}^{k_{i_F}(j_F(a),i_F(a))}
		a^{-1}B_{j_F(a),i_F(a),r}^{(i_F)}.
		\]
		 Note that the sets appearing in the right-hand side of (\ref{estimate WF})
		are elements of \(\mathcal U^F\) since \(F=F_{R_{i_F,j_F(a)}}a\).
		We also note that \(\mathcal E_F\) has cardinality at most \(|F|\). In fact,
		since \(e_G\in F_{R_{i_F,j_F(a)}}\), every \(a\in\mathcal E_F\) satisfies
		$
		a\in F_{R_{i_F,j_F(a)}}a=F.
		$
		Thus \(\mathcal E_F\subset F\), and hence
		$
		|\mathcal E_F|\le |F|.
		$
		Therefore
		\begin{equation}\label{dk}
			\begin{aligned}
				N(\mathcal U^F,W_{\mathcal F})
				&\le
				\sum_{a\in \mathcal{E}_F}
				k_{i_F}(j_F(a),i_F(a))\ \ \ (\mbox{by}\ (\ref{estimate WF}))
				\\&= \left(\sum_{a\in \mathcal{E}_F}k_{i_F}(j_F(a),i_F(a))\mathrm{e}^{-\lambda^{*}|F_{R_{i_F,j_F(a)}}|}\right)\cdot \mathrm{e}^{\lambda^{*}|F|} \\&<|\mathcal{E}_{F}|\cdot  \mathrm{e}^{\lambda^{*}|F|}\ \ \ (\mbox{as}\,\, k_{i_F}(j_F(a),i_F(a))\mathrm{e}^{-\lambda^{*}|F_{R_{i_F,j_F(a)}}|}<1)
				\\&\leq |F|\cdot \mathrm{e}^{(\lambda^{*}-\lambda_{\delta})|F|}\cdot \mathrm{e}^{\lambda_{\delta}|F|}\leq \mathrm{e}^{\lambda_{\delta}|F|}\ \ \ (\text{by}\ (\ref{expo})\  \text{and}\ R_{i_F,j_F(a)}\geq \widetilde{N}\geq N').
			\end{aligned}
		\end{equation}
		
		As $|F_{n}\setminus\cup \mathcal{F}|\leq (\delta + \delta^{\frac{1}{4}})|F_{n}|$ by the construction of applying
Lemma \ref{coveringlem}, one has

		\[
		\begin{aligned}
			N(\mathcal{U}^{F_{n}},W_{\mathcal{F}})&\leq |\mathcal{U}|^{|F_{n}\setminus\cup \mathcal{F}|} \cdot\prod_{F\in \mathcal{F}}
			N (\mathcal{U}^{F}, W_{\mathcal{F}})
			\\&\leq |\mathcal{U}|^{(\delta+\delta^{\frac{1}{4}})|F_{n}|}\cdot\prod_{F\in \mathcal{F}}\mathrm{e}^{\lambda_\delta|F|}\ \ \ (\text{using \eqref{dk}})
			\\&\leq |\mathcal{U}|^{(\delta+\delta^{\frac{1}{4}})|F_{n}|}\cdot\exp{\Bigg({\frac{\lambda_\delta|\cup \mathcal{F}|}{1-10\delta^{\frac{1}{4}}}}\Bigg)}\ \ \ (\mbox{as}\,\, \mathcal{F} \,\,\text{is}\,\,10\delta^{\frac{1}{4}} \text{-disjoint})
			\\&\leq  |\mathcal{U}|^{(\delta+\delta^{\frac{1}{4}})|F_{n}|}\cdot \mathrm{e}^{\lambda|F_{n}|}\ \ \ (\text{note}\ \lambda_\delta=(1-10\delta^{\frac{1}{4}})\lambda\ \text{and}\ \cup \mathcal{F}\subset F_n).
		\end{aligned}
		\]
		This finishes the proof of \eqref{estimate-number of disjoint}, and hence finishes the whole proof.
	\end{proof}
	
	Now we are ready to prove Theorem \ref{thm3}.
	
	\begin{proof}[Proof of Theorem \ref{thm3}]
		With the help of \cite[Theorem  13.3]{Dooley-Zhang2015} we already obtain that
\[h_{\mathrm{top}}(G,X|\pi)
		= \sup_{y\in Y} h_{\mathrm{top}}(\pi^{-1}(y),\{F_n\}_{n\in\mathbb N})
		\ge \sup_{y\in Y} h_{\mathrm{dim}}(\pi^{-1}(y),\{F_n\}_{n\in\mathbb N})
		\]
for any F\o lner sequence $\{F_n\}_{n\in\mathbb{N}}$, where the inequality follows from the basic fact that
\[h_{\mathrm{top}}(\pi^{-1}(y),\{F_n\}_{n\in\mathbb N})
		\ge h_{\mathrm{dim}}(\pi^{-1}(y),\{F_n\}_{n\in\mathbb N})\]		
(see for example \cite[Proposition 5.1]{Dou-Wang-Zhang2025}).

Therefore, it remains to prove that, under the temperedness assumption and (\ref{1.1}),
		\begin{equation} \label{section-5}
h_{\mathrm{top}}(G,X|\pi)
		\le \sup_{y\in Y} h_{\mathrm{dim}}(\pi^{-1}(y),\{F_n\}_{n\in\mathbb N}).
\end{equation}
		Suppose that $\sup_{y\in Y}h_{\mathrm{dim}}(\pi^{-1}(y),\{F_{n}\}_{n\in\mathbb N})<\infty$ otherwise there is nothing to prove.
		Take arbitrarily $\lambda\in \mathbb{R}$ satisfies $\lambda>\sup_{y\in Y}h_{\mathrm{dim}}(\pi^{-1}(y),\{F_{n}\}_{n\in\mathbb N})$ and
		put
		$
		\widetilde F_n:=F_n\cup\{e_G\}$ for each $n\in\mathbb N.
		$
		By Lemma~\ref{prop-folnerseq},
		\(\{\widetilde F_n\}_{n\in\mathbb N}\) is still a tempered F\o lner sequence
		satisfying \eqref{1.1}, and \(e_G\in\widetilde F_n\) for every \(n\), and
then, by Proposition~\ref{prop16}, for every
		\(\mathcal U\in\mathfrak C_X^o\),
		\[
		\sup_{y\in Y}
		h_{\mathrm{dim}}(\pi^{-1}(y),\mathcal U,
		\{\widetilde F_n\}_{n\in\mathbb N})
		<\lambda.
		\]
Now applying Proposition \ref{prop15} to the tempered F\o lner sequence \(\{\widetilde F_n\}_{n\in\mathbb N}\), one has that, for each  $\mathcal{U}\in \mathfrak{C}_{X}^{o}$ and every small $\delta>0$, there exists $P\in \mathbb{N}$ such that
		$$
		\sup_{y\in Y}N(\mathcal{U}^{\widetilde F_{n}},\pi^{-1}(y))\leq |\mathcal{U}|^{(\delta+\delta^{\frac{1}{4}})|\widetilde F_{n}|}\cdot \mathrm{e}^{\lambda|\widetilde F_{n}|}\cdot 2^{\delta|\widetilde F_{n}|}\quad \text{ for each $n>P$}.
		$$
		Thus, by taking logarithms, dividing by \(|\widetilde F_n|\), and then
		letting \(n\to\infty\), we obtain, using the independence of
		\(h_{\mathrm{top}}(G,\mathcal U|\pi)\) from the choice of the F\o lner sequence,
		that
		\[
		h_{\mathrm{top}}(G,\mathcal{U}|\pi)\leq \lambda+(\delta+\delta^{\frac{1}{4}})\log |\mathcal{U}|+\delta.
		\]
		Since \(\delta>0\) can be chosen arbitrarily small, letting \(\delta\to0\) we have
		$
		h_{\mathrm{top}}(G,\mathcal U|\pi)\leq \lambda
		$
		for every \(\mathcal U\in\mathfrak C_X^o\).
	Now letting
		$
		\lambda\searrow
		\sup_{y\in Y}h_{\mathrm{dim}}(\pi^{-1}(y),\{F_n\}_{n\in\mathbb N})
		$ and taking the supremum over \(\mathcal U\in\mathfrak C_X^o\),
		we obtain the required inequality \eqref{section-5}.
		Thus Theorem~\ref{thm3} follows.
	\end{proof}

	\section*{Acknowledgements}

	We would like to thank Dou Dou for sharing
with us his recent preprint \cite{Dou-Wang-Zhou2026-preprint}, and thank Ruifeng Zhang for useful discussions. Guohua Zhang is supported by the National Key Research and Development Program of China (No. 2021YFA1003204). This
work has been supported by the New Cornerstone Science Foundation through the New
Cornerstone Investigator Program.

	\bibliographystyle{alpha}
	
\bibliographystyle{amsplain}

\begin{thebibliography}{DWWZ26}

\bibitem[AKM65]{Adler-Konheim-McAndrew1965}
Roy~L. Adler, Alan~G. Konheim, and M.~H. McAndrew.
\newblock Topological entropy.
\newblock {\em Trans. Amer. Math. Soc.}, 114:309--319, 1965.

\bibitem[Bow72]{R.Bowen1972}
Rufus Bowen.
\newblock Entropy-expansive maps.
\newblock {\em Trans. Amer. Math. Soc.}, 164:323--331, 1972.

\bibitem[Bow73]{R.Bowen1973-TAMS}
Rufus Bowen.
\newblock Topological entropy for noncompact sets.
\newblock {\em Trans. Amer. Math. Soc.}, 184:125--136, 1973.

\bibitem[CZ15]{Chung-Zhang2015}
Nhan-Phu Chung and Guohua Zhang.
\newblock Weak expansiveness for actions of sofic groups.
\newblock {\em J. Funct. Anal.}, 268(11):3534--3565, 2015.

\bibitem[Dan01]{Danilenko2001}
Alexandre~I. Danilenko.
\newblock Entropy theory from the orbital point of view.
\newblock {\em Monatsh. Math.}, 134(2):121--141, 2001.

\bibitem[Dow11]{Downarowicz2011}
Tomasz Downarowicz.
\newblock {\em Entropy in dynamical systems}, volume~18 of {\em New
  Mathematical Monographs}.
\newblock Cambridge University Press, Cambridge, 2011.

\bibitem[DWWZ26]{Downarowicz-Weiss-Wiecek-Zhang2026-preprint}
Tomasz Downarowicz, Benjamin Weiss, Mateusz Wi\k{e}cek, and Guohua Zhang.
\newblock Universality of {$G$}-subshifts with specification.
\newblock {\em preprint}, 2026.

\bibitem[DWZ25]{Dou-Wang-Zhang2025}
Dou Dou, Ying Wang, and Guohua Zhang.
\newblock New characterizations of topological conditional entropy for actions
  of amenable groups.
\newblock {\em Fund. Math.}, 271(1):71--96, 2025.

\bibitem[DWZ26]{Dou-Wang-Zhou2026-preprint}
Dou Dou, Xiaochen Wang, and Xiaomin Zhou.
\newblock Shannon-{M}c{M}illan-{B}reiman type theorems for amenable groups.
\newblock {\em preprint}, 2026.

\bibitem[DZ15]{Dooley-Zhang2015}
Anthony~H. Dooley and Guohua Zhang.
\newblock Local entropy theory of a random dynamical system.
\newblock {\em Mem. Amer. Math. Soc.}, 233(1099):vi+106, 2015.

\bibitem[DZ22]{Downarowicz-Zhang2022-IJM}
Tomasz Downarowicz and Guohua Zhang.
\newblock Tail variational principle and asymptotic {$h$}-expansiveness for
  amenable group actions.
\newblock {\em Israel J. Math.}, 251(1):301--325, 2022.

\bibitem[DZ23]{Downarowicz-Zhang2023}
Tomasz Downarowicz and Guohua Zhang.
\newblock Symbolic extensions of amenable group actions and the comparison
  property.
\newblock {\em Mem. Amer. Math. Soc.}, 281(1390):vi+95, 2023.

\bibitem[DZZ23]{Dou-Zheng-Zhou2023}
Dou Dou, Dongmei Zheng, and Xiaomin Zhou.
\newblock Packing topological entropy for amenable group actions.
\newblock {\em Ergodic Theory Dynam. Systems}, 43(2):480--514, 2023.

\bibitem[FH12]{Feng-Huang2012}
De-Jun Feng and Wen Huang.
\newblock Variational principles for topological entropies of subsets.
\newblock {\em J. Funct. Anal.}, 263(8):2228--2254, 2012.

\bibitem[Gro99]{Gromov1999}
Misha Gromov.
\newblock Topological invariants of dynamical systems and spaces of holomorphic
  maps. {I}.
\newblock {\em Math. Phys. Anal. Geom.}, 2(4):323--415, 1999.

\bibitem[GTW00]{Glasner-Thouvenot-Weiss2000}
Eli Glasner, Jean-Paul Thouvenot, and Benjamin Weiss.
\newblock Entropy theory without a past.
\newblock {\em Ergodic Theory Dynam. Systems}, 20(5):1355--1370, 2000.

\bibitem[HYZ11]{Huang-Ye-Zhang2011}
Wen Huang, Xiangdong Ye, and Guohua Zhang.
\newblock Local entropy theory for a countable discrete amenable group action.
\newblock {\em J. Funct. Anal.}, 261(4):1028--1082, 2011.

\bibitem[HZZ26]{He-Zhang-Zhang2026-preprint}
Xinyao He, Guohua Zhang, and Ruifeng Zhang.
\newblock Metric mean dimension of amenable group actions: localization and
  non-uniformity.
\newblock {\em arXiv:2606.13270}, 2026.

\bibitem[Kat07]{Katok2007}
Anatole Katok.
\newblock Fifty years of entropy in dynamics: 1958--2007.
\newblock {\em J. Mod. Dyn.}, 1(4):545--596, 2007.

\bibitem[Kol58]{Kolmogorov1958}
A.~N. Kolmogorov.
\newblock A new metric invariant of transient dynamical systems and
  automorphisms in {L}ebesgue spaces.
\newblock {\em Dokl. Akad. Nauk SSSR (N.S.)}, 119:861--864, 1958.

\bibitem[Lin01]{Lindenstrauss2001}
Elon Lindenstrauss.
\newblock Pointwise theorems for amenable groups.
\newblock {\em Invent. Math.}, 146(2):259--295, 2001.

\bibitem[Mis76]{Misiurewicz1976-SM}
Micha{\l} Misiurewicz.
\newblock Topological conditional entropy.
\newblock {\em Studia Math.}, 55(2):175--200, 1976.

\bibitem[OW87]{Ornstein-Weiss1987}
Donald~S. Ornstein and Benjamin Weiss.
\newblock Entropy and isomorphism theorems for actions of amenable groups.
\newblock {\em J. Analyse Math.}, 48:1--141, 1987.

\bibitem[OZ11]{Oprocha-Zhang2011}
Piotr Oprocha and Guohua Zhang.
\newblock Dimensional entropy over sets and fibres.
\newblock {\em Nonlinearity}, 24(8):2325--2346, 2011.

\bibitem[Pes97]{Pesin1997}
Yakov~B. Pesin.
\newblock {\em Dimension theory in dynamical systems}.
\newblock Chicago Lectures in Mathematics. University of Chicago Press,
  Chicago, IL, 1997.
\newblock Contemporary views and applications.

\bibitem[WZ25]{Wang-Zhang2025-preprint}
Xulei Wang and Guohua Zhang.
\newblock Smooth surface systems may contain smooth curves which have no
  measure of maximal entropy.
\newblock {\em arXiv:2505.10458}, 2025.

\bibitem[Yan15]{Yan2015}
Kesong Yan.
\newblock Conditional entropy and fiber entropy for amenable group actions.
\newblock {\em J. Differential Equations}, 259(7):3004--3031, 2015.

\bibitem[YZ07]{Ye-Zhang2007}
Xiangdong Ye and Guohua Zhang.
\newblock Entropy points and applications.
\newblock {\em Trans. Amer. Math. Soc.}, 359(12):6167--6186, 2007.

\bibitem[ZC16]{Zheng-Chen2016}
Dongmei Zheng and Ercai Chen.
\newblock Bowen entropy for actions of amenable groups.
\newblock {\em Israel J. Math.}, 212(2):895--911, 2016.

\bibitem[ZZC15]{Zhou-Zhang-Chen2015}
Xiaoyao Zhou, Yaqing Zhang, and Ercai Chen.
\newblock Topological conditional entropy for amenable group actions.
\newblock {\em Proc. Amer. Math. Soc.}, 143(1):141--150, 2015.

\end{thebibliography}

\end{document}